\title[Convexity properties and Hardy's uncertainty principle]{\textbf{Convexity properties of discrete Schr\"odinger evolutions and Hardy's Uncertainty principle}}
\author{Aingeru Fern\'andez-Bertolin}
\keywords{}
\address{A. Fern\'andez-Bertolin:Departamento de Matem\'aticas,  Universidad del Pa\'is Vasco UPV/EHU, apartado 644, 48080, Bilbao, Spain}
\email{aingeru.fernandez@ehu.es}
\newtheorem{teo}{Theorem}[section]
\newtheorem{prop}{Proposition}[section]
\newtheorem{lem}{Lemma}[section]
\theoremstyle{remark}
\newtheorem{rem}{Remark}[section]
\DeclareMathOperator{\arcsinh}{arcsinh}
\newcommand{\omegat}{\tilde{\omega}}
\newcommand{\zt}{\tilde{z}}
\newcommand{\sumzd}{\sum_{j\in\mathbb{Z}^d}}
\begin{document}

\begin{abstract}
In this paper we give log-convexity properties for solutions to discrete Schr\"odinger equations with different discrete versions of Gaussian decay at two different times. For free evolutions, we use complex analysis arguments to derive these properties, while in a perturbative setting we use a preliminar log-convexity result in order to get these properties. Then, by proving a Carleman inequality we conclude, in one of the cases under study, a discrete version of Hardy's Uncertainty Principle.
\end{abstract}
\maketitle

\section{Introduction}
The aim of this paper is to show log-convexity properties for solutions to the free Schr\"odinger equation
\begin{equation}\label{eqsch}
\partial_tu_j=i\Delta_du_j=i\sum_{k=1}^d(u_{j+e_k}-2u_j+u_{j-e_k}),
\end{equation}
and also for solutions to a perturbed discrete Schr\"odinger equation
\begin{equation}\label{eqschpot}
\partial_tu_j=i(\Delta_du_j+V_ju_j),
\end{equation}
when $V$ is a time-dependent bounded potential.

Our motivation is the relation between log-convexity properties and Hardy's uncertainty principle (see \cite{dm,ss}),
\begin{equation}\label{hardy}
|f(x)|\le Ce^{-\alpha |x|^2},\ \ |\hat{f}(\xi)|\le Ce^{-\beta|\xi|^2}, \ \text{with }\alpha\beta>\frac{1}{4}\Rightarrow f\equiv0,
\end{equation}
and in the case $\alpha\beta=\frac14$ then $f(x)=Ce^{-\alpha |x|^2}$. This uncertainty principle can be understood as an amplification of Heisenberg's uncertainty principle
\[
\frac{2}{d}\left(\int_{\mathbb{R}^d}|xf(x)|^2\,dx\right)^{1/2}\left(\int_{\mathbb{R}^d}|\nabla f(x)|^2\,dx\right)^{1/2}\ge \int_{\mathbb{R}^d}|f(x)|^2\,dx,
\]
where the equality is attained if and only if $f(x)=C e^{-\alpha |x|^2}$ for $\alpha>0$. Moreover, writing a solution to the Schr\"odinger equation $\partial_tu=i\Delta u$ as
\[
u(x,t)=\frac{e^{i|x|^2/4t}}{(it)^{d/2}}\left(e^{i|\cdot|^2/4t}u_0\right)^\wedge\left(\frac{x}{2t}\right),
\] 
Hardy's uncertainty principle can be written in terms of solutions to the Schr\"odinger equation in a $L^2$ setting (see \cite{cp}) as follows: 
\[
\|e^{\alpha |x|^2}u(0)\|_{L^2(\mathbb{R}^d)}+\|e^{\beta |x|^2}u(1)\|_{L^2(\mathbb{R}^d)}<+\infty,\ \ \alpha\beta>\frac{1}{16} \Rightarrow u\equiv0,
\]
so this result states that a solution to the Schr\"odinger equation cannot decay too fast at two different times simultaneously. The classical proof of this uncertainty principle is based on complex analysis arguments (Phragm\'en-Lindel\"of's principle and Liouville's theorem), but there is a series of papers, \cite{ekpv1,ekpv2,ekpv3,ekpv4} and \cite{cekpv}, where the authors prove Hardy's uncertainty principle in this dynamical setting, considering solutions to perturbed Schr\"odinger equations and using real variable arguments. One of the main tools they use is precisely a log-convexity result that states that a solution to those equations with Gaussian decay at two different times preserves the Gaussian decay at any time in between. This process to prove Hardy's principle using real calculus starts in \cite{ekpv2} with a non-sharp result combining the log-convexity property with a Carleman inequality, and then in \cite{ekpv3} they use an iterative process to go from this preliminar result to the sharp result.

In the discrete setting, we studied in a previous paper, \cite{fb}, a version of Heisenberg's principle (see also \cite{an,cn,gg} for more references to this uncertainty principle) generated by the discretization of the position and momentum operators
\begin{equation}\label{ops}
\mathcal{S}^hu_j=jhu_j=(j_1h,\dots,j_dh)u_k,\ \ \ \ \ \mathcal{A}^hu_j=\left(\frac{u_{j+e_1}-u_{j-e_1}}{2h},\dots,\frac{u_{j+e_d}-u_{j-e_d}}{2h}\right),
\end{equation}
where $j\in\mathbb{Z}^d$, $e_k=(0,\dots,0,\overbrace{1}^k,0,\dots,0)$, for $k=1,\dots,d$, and related it to the discrete Schr\"odinger equation \eqref{eqsch} via a Virial identity. In this case, the minimizer $\omega$ (the analogous of the Gaussian function) is given in terms of modified Bessel functions of the first kind
\[
I_m(x)=\frac{1}{\pi}\int_0^\pi e^{z\cos\theta}\cos(m\theta)\,d\theta,\ \ m\in\mathbb{Z},\ \ \omega=(\omega_j)_{j\in\mathbb{Z}^d}=\left(C_{h\lambda}I_j\left(\frac{1}{2\lambda h^2}\right)\right)_{j\in\mathbb{Z}^d}.
\]

Moreover, in the paper we saw how we can recover the Gaussian $e^{\lambda |x|^2}$ from the minimizer $\omega$ as the mesh step tends to zero.

On the other hand, using complex analysis arguments, we gave in \cite{fb2} a discrete version of Hardy's principle in one dimension, similar to \eqref{hardy}, that can be written in terms of solutions to the discrete free Schr\"odinger equation as in the classical case,
\begin{equation}\label{hardyd}
|u_j(0)|\le I_j(\alpha),\ \ |u_j(1)|\le I_j(\beta),\ \ \alpha+\beta<2\Rightarrow u\equiv0.
\end{equation}

It is natural to think then that we should be able to prove this result following the approach in \cite{ekpv2}, so the first thing we need to understand is the analogous version of the log-convexity property stated in \cite{ekpv2}. Notice that in the discrete setting we can give many discrete versions of Gaussian decay. Looking at \eqref{hardyd}, the first interpretation to play the role of $e^{\lambda |x|^2}$ one can think of is the inverse of the minimizer $\omega$. However, we can understand the function $e^{\lambda |x|^2}$ as the solution to the adjoint equation of $\nabla f+2\lambda x f=0$, the equation satisfied by the Gaussian. If we do the same using the operators \eqref{ops}, it is easy to check that now the weight is given in terms of modified Bessel functions of the second kind 
\[
K_m(x)=\int_0^\infty e^{-x\cosh t}\cosh(m t)\,dt.
\]

On the other hand, we can simplify more the discrete interpretation of Gaussian decay, just using the weight function $e^{\lambda |j|^2}$.

Here we give two different methods in order to prove that solutions to the discrete Schr\"odinger equation with those discrete versions of Gaussian decay satisfy a log-convexity property. Formally we see that the log-convexity holds, but trying to justify these formal calculations is where we use different methods. First, by relating the discrete solution to a periodic function via Fourier series we can use complex analysis arguments in order to justify everything.  However, this method is only useful when considering solutions to the free case \eqref{eqsch}. On the other hand, for solutions to \eqref{eqschpot} we can give a preliminar log-convexity property, in the spirit of \cite{kpv}, using a linear exponential weight, which, by a simple fact, allows us to prove the log-convexity properties we want in the perturbative setting. The advantage of using the first method is that proving the log-convexity directly we can get some a priori estimates. These estimates were crucial in the continuous case, although in the discrete setting this does not seem to happen.

Once we have the log-convexity properties for the different weights, since we want to follow the approach in \cite{ekpv2}, we need a Carleman inequality in order to prove Hardy's uncertainty principle. Using modified Bessel functions, it is not clear how we should take the frequency function, but in this paper we prove that there is a Carleman inequality associated to $e^{\lambda |j|^2}$ for the discrete Schr\"odinger equation, analogous to the inequality in \cite{ekpv2}, that can be used to prove a non-sharp discrete version of Hardy's uncertainty principle. Hence, we give a discrete uncertainty principle whose proof looks like the proof in the continuous case.

Preparing this manuscript, we learned about a recent and independent result in this direction \cite{jlmp}. There, the authors also prove a sharp analog of Hardy's uncertainty principle in the discrete setting, in terms of solutions to the $1d$ discrete free Schr\"odinger equation by using complex analysis arguments. To avoid the use of complex analysis, and to add a potential to the equation, they adapt the log-convexity approach in \cite{ekpv2}, getting also a non-sharp result in this case. Their result improves our version of Hardy's principle (Theorem 4.1 below), when we focus on the one dimensional case, although the proofs of the results are rather different, since here we follow more closely the approach in \cite{ekpv2}.

The paper is organized as follows: In Section 2 we give log-convexity properties using the weights discussed above for discrete free Schr\"odinger evolutions, so, in order to justify the formal calculations, we use tools of complex analysis. Then, in Section 3, we add a potential and prove a result using a linear exponential weight, concluding from this result the log-convexity properties of Section 2, now in a perturbative setting. Finally, in Section 4 we prove the discrete version of Hardy's principle by means of a Carleman inequality.

In our previous papers, we study the discrete Schr\"odinger equation with the mesh step $h$, so that when $h$ tends to zero the solution to the discrete equation converges to the solution to the continuous equation. Here, since we are not going to study convergence of the results when $h$ tends to zero, we fixed, just for simplicity, $h=1$, although all the results can be written introducing this parameter.

\section{Log-convexity properties of discrete free Schr\"odinger evolutions}

To begin with, we consider that the solution to \eqref{eqsch} decays when we multiply it by the inverse of the discrete minimizer in \cite{fb}. Then we have the following result:

\begin{teo}
Assume $u=(u_j)_{j\in\mathbb{Z}^d}$ is a solution to the $d$-dimensional free Schr\"odinger equation \eqref{eqsch} which satisfies
\[
\sum_{j\in\mathbb{Z}^d}\frac{1}{\omega_j^2}|u_j(0)|^2+\sum_{j\in\mathbb{Z}^d}\frac{1}{\omega_j^2}|u_j(1)|^2<+\infty,
\]
where $\omega_j=C_{d,\lambda}\prod_{k=1}^d I_{j_k}(1/2\lambda)$ for some $\lambda>0$. Then
\[
F(t)=\sum_{j\in\mathbb{Z}^d}\frac{1}{\omega_j^2}|u_j(t)|^2 \text{ is logarithmically convex in } 0\le t\le 1.
\]
\end{teo}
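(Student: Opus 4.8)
The plan is to remove the weight by conjugation and then run the standard self-adjoint/skew-adjoint convexity computation, controlling the commutator that appears by means of the three-term recurrence for modified Bessel functions. First I would set $f_j = u_j/\omega_j$, so that $F(t) = \sum_{j}|f_j(t)|^2 = \|f(t)\|_{\ell^2(\mathbb{Z}^d)}^2$ and, writing $M_\omega$ for multiplication by $\omega$, the conjugated function solves $\partial_t f = i\mathcal{H}f$ with $\mathcal{H} = M_\omega^{-1}\Delta_d M_\omega$. I would then split $i\mathcal{H} = \mathcal{S}+\mathcal{A}$ into its symmetric and antisymmetric parts on $\ell^2(\mathbb{Z}^d)$; both are independent of $t$ because $\omega$ and $\Delta_d$ are, and in each coordinate they are tridiagonal operators whose entries are built from the ratios $\omega_{j\pm e_k}/\omega_j$.

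With $H = \|f\|^2$ and $D = \langle \mathcal{S}f,f\rangle$, the elementary identities $H' = 2D$ and $H'' = 4\|\mathcal{S}f\|^2 + 2\langle(\partial_t\mathcal{S} + [\mathcal{S},\mathcal{A}])f,f\rangle$ together with Cauchy--Schwarz give
\[
(\log F)'' = \frac{H''H - (H')^2}{H^2} \ge \frac{2}{H}\langle(\partial_t\mathcal{S} + [\mathcal{S},\mathcal{A}])f,f\rangle .
\]
Since $\partial_t\mathcal{S} = 0$ here, log-convexity reduces to the positivity of the commutator, $\langle[\mathcal{S},\mathcal{A}]f,f\rangle \ge 0$.

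To establish this I would first use that $\omega_j = C_{d,\lambda}\prod_k I_{j_k}(1/2\lambda)$ is a product while $\Delta_d$ is a sum, so $\mathcal{S} = \sum_k \mathcal{S}^{(k)}$ and $\mathcal{A} = \sum_k \mathcal{A}^{(k)}$ act on separate coordinates; operators attached to different coordinates commute, whence $[\mathcal{S},\mathcal{A}] = \sum_k [\mathcal{S}^{(k)},\mathcal{A}^{(k)}]$ and it suffices to treat $d=1$. In one dimension $[\mathcal{S},\mathcal{A}]$ couples $f_j$ only to $f_j$ and $f_{j\pm2}$ (the $\pm1$ shifts cancel in the commutator), and I expect that substituting the recurrence $I_{j-1}(1/2\lambda) - I_{j+1}(1/2\lambda) = 4\lambda j\, I_j(1/2\lambda)$, i.e. $\omega_{j-1} - \omega_{j+1} = 4\lambda j\,\omega_j$, collapses the associated quadratic form into a sum of squares. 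This recurrence is precisely the discrete analogue of the Gaussian identity $G' + 2\lambda x G = 0$ satisfied by the minimizer, and it is the algebraic heart of the argument.

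The hard part will be justifying the formal steps above: a priori one is only given $F(0),F(1)<\infty$, not that $F(t)<\infty$ for intermediate $t$, nor that $F$ may be differentiated termwise or that the discrete summation by parts leaves no boundary contribution at infinity. To make everything rigorous I would pass to the torus by Fourier series, $\hat u(\theta,t) = \sum_j u_j(t)e^{ij\cdot\theta}$, which solves $\partial_t\hat u = -2i(\sum_k(1-\cos\theta_k))\hat u$ and hence $\hat u(\theta,t) = e^{-2it\sum_k(1-\cos\theta_k)}\hat u(\theta,0)$; inverting gives $u_j(t)$ as convolution with an explicit kernel that is entire in $t$. This representation supplies the decay and smoothness in $t$ needed to legitimize the termwise differentiation and the vanishing of the tails, and it allows the convexity to be phrased through a Phragmén--Lindelöf/three-lines argument in a complex strip around $[0,1]$. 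This justification, rather than the algebra of the commutator, is where I expect the real work to lie, and it is the reason the free case is handled by complex analysis rather than by the purely real-variable method used later for the perturbed equation.
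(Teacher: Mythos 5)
Your overall architecture coincides with the paper's: conjugate by the weight, split into symmetric and skew-symmetric parts, reduce to $d=1$ by the product structure, and justify the formal computation via Fourier series and contour shifting. But there is a genuine gap at the heart of the argument, namely your claim that substituting the recurrence $I_{j-1}(1/2\lambda)-I_{j+1}(1/2\lambda)=4\lambda j\,I_j(1/2\lambda)$ ``collapses the associated quadratic form into a sum of squares.'' It does not. After the recurrence is used, the quadratic form $\langle[\mathcal{S},\mathcal{A}]g,g\rangle$ splits into a part with coefficients proportional to
\[
\frac{\omegat_{j}^2}{2\omegat_{j+1}\omegat_{j-1}}-\frac{\omegat_{j-1}\omegat_{j+1}}{2\omegat_{j}^2}
\]
multiplying $|g_{j+1}-g_{j-1}|^2$, whose positivity already requires the Tur\'an-type inequality $I_j^2(x)-I_{j+1}(x)I_{j-1}(x)>0$ (Amos) --- a nontrivial analytic fact about modified Bessel functions, not an algebraic identity --- plus a diagonal part whose coefficient is the expression \eqref{eq1}. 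The positivity of \eqref{eq1} is the real content of the theorem: it is not a perfect square, the asymptotic regimes $x\ll j$ and $x\gg j$ behave differently, and the paper can only establish it by splitting into three regions ($j\ge 17$ with $x\le j^{3/2}$, $j\le 17$, and $j\ge 17$ with $x\ge j^{3/2}$), inserting rational upper and lower bounds for the ratios $I_{j+1}/I_j$ and for Tur\'anians, and then verifying positivity of high-degree polynomials with computer assistance. Your ``I expect that\dots'' sentence is carrying the entire burden of the proof, and the expectation is not borne out.

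A secondary, smaller issue: your justification step is in the right spirit (periodization, entire extension in the dual variable, contour shifting), but as stated it does not close the argument. The $\ell^\infty$ bound one obtains this way carries a constant of order $1/t+1/(1-t)$, which blows up at the endpoints, and it yields decay $I_j(\alpha)/\sqrt{|j|}$, which is not square-summable against $1/I_j^2(\alpha)$. One must first deduce finiteness of $F_{\alpha+\epsilon}(t)$ for $\epsilon>0$, prove log-convexity of $F_{\alpha+\epsilon}$ on compact subintervals $[t_0,t_1]\subset(0,1)$, pass to the endpoints using the explicit convolution representation of the solution, and finally let $\epsilon\to 0$ by Fatou's lemma. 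None of these steps is automatic from a three-lines argument in the time strip.
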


In order to prove that $F(t)$ is log-convex in the interval $[0,1]$, we want to use the following lemma (see \cite{ekpv2}).
\begin{lem}
Assume that $f(t)$ satisfies $\partial_tf=\mathcal{S}f+\mathcal{A}f$, where $\mathcal{S}$ is a symmetric operator and $\mathcal{A}$ is a skew-symmetric operator (both independent of $t$). If $[\mathcal{S},\mathcal{A}]\ge0$, then $H(t)=\langle f,f\rangle$ is logaritmically convex. 
\end{lem}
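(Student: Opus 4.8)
The plan is to use the standard second-derivative criterion for log-convexity: since $H(t)>0$, showing that $\log H$ is convex is equivalent to the pointwise inequality $H''(t)H(t)\ge (H'(t))^2$. First I would differentiate $H(t)=\langle f,f\rangle$ once. Using $\partial_tf=\mathcal{S}f+\mathcal{A}f$ and the conjugate-symmetry of the inner product, $H'(t)=2\Re\langle(\mathcal{S}+\mathcal{A})f,f\rangle$. Here the two symmetry hypotheses do the bookkeeping: $\langle\mathcal{S}f,f\rangle$ is real because $\mathcal{S}$ is symmetric, while $\langle\mathcal{A}f,f\rangle$ is purely imaginary because $\mathcal{A}$ is skew-symmetric, so it drops out of the real part. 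This leaves $H'(t)=2D(t)$ with $D(t):=\langle\mathcal{S}f,f\rangle$.

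Next I would differentiate $D(t)$. Since $\mathcal{S}$ is time-independent and symmetric, the product rule gives $D'(t)=2\Re\langle\mathcal{S}f,\partial_tf\rangle=2\Re\langle\mathcal{S}f,(\mathcal{S}+\mathcal{A})f\rangle$. The diagonal term $\langle\mathcal{S}f,\mathcal{S}f\rangle=\|\mathcal{S}f\|^2$ is real, while the cross term rearranges, via the symmetry of $\mathcal{S}$ and the skew-symmetry of $\mathcal{A}$, into $2\Re\langle\mathcal{S}f,\mathcal{A}f\rangle=\langle(\mathcal{S}\mathcal{A}-\mathcal{A}\mathcal{S})f,f\rangle=\langle[\mathcal{S},\mathcal{A}]f,f\rangle$. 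Thus $H''(t)=2D'(t)=4\|\mathcal{S}f\|^2+2\langle[\mathcal{S},\mathcal{A}]f,f\rangle$.

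Then I would close the argument with Cauchy-Schwarz together with the commutator hypothesis. The first derivative gives $(H'(t))^2=4\langle\mathcal{S}f,f\rangle^2\le 4\|\mathcal{S}f\|^2\|f\|^2=4\|\mathcal{S}f\|^2H(t)$. On the other hand, since $[\mathcal{S},\mathcal{A}]\ge0$ and $H\ge0$, dropping the nonnegative commutator term yields $H''(t)H(t)\ge 4\|\mathcal{S}f\|^2H(t)$. Combining the two inequalities produces $H''(t)H(t)\ge(H'(t))^2$, which is exactly $(\log H)''\ge0$, as desired.

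The computation itself is short and its structure is transparent; the genuine difficulty is not in this abstract lemma but in its later application, where one must verify that the candidate operators $\mathcal{S}$ and $\mathcal{A}$ really are (skew-)symmetric on the relevant weighted $\ell^2$ space, that $f(t)$ remains in their domains, and that interchanging $\partial_t$ with the infinite sums defining $\langle\cdot,\cdot\rangle$, $D$, and $D'$ is legitimate. For the abstract statement I would carry out the manipulations formally, flagging that these convergence and domain questions are precisely what the complex-analytic and perturbative machinery of the surrounding sections is designed to justify.
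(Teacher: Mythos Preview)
Your argument is correct and is exactly the standard computation behind this lemma. The paper itself does not prove Lemma~2.1; it simply quotes the statement from \cite{ekpv2}, so there is nothing to compare beyond noting that your derivation is the one found there.
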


\begin{proof}[Proof of Theorem 2.1]

Formally, we set $f_j(t)=\frac{u_j(t)}{\omega_j}$ and it is easy to check that $\partial_tf_j=\mathcal{S}f_j+\mathcal{A}f_j$ where
\[\begin{aligned}
&\mathcal{S}g_j=\frac{i}{2}\sum_{k=1}^d\left(\left(\frac{\omegat_{j_k+1}}{\omegat_{j_k}}-\frac{\omegat_{j_k}}{\omegat_{j_k+1}}\right)g_{j+e_k}+\left(\frac{\omegat_{j_k-1}}{\omegat_{j_k}}-\frac{\omegat_{j_k}}{\omegat_{j_k-1}}\right)g_{j-e_k}\right),\\
&\mathcal{A}g_j=\frac{i}{2}\sum_{k=1}^d\left(\left(\frac{\omegat_{j_k+1}}{\omegat_{j_k}}+\frac{\omegat_{j_k}}{\omegat_{j_k+1}}\right)g_{j+e_k}-4g_j+\left(\frac{\omegat_{j_k-1}}{\omegat_{j_k}}+\frac{\omegat_{j_k}}{\omegat_{j_k-1}}\right)g_{j-e_k}\right),
\end{aligned}\]
where we define, for $n\in\mathbb{Z}$, $\tilde{\omega}_n=I_n(1/2\lambda)$. Hence, since $F(t)=\langle f,f\rangle$, in order to use the lemma we need to show that $[\mathcal{S},\mathcal{A}]=\mathcal{S}\mathcal{A}-\mathcal{A}\mathcal{S}\ge 0.$

The commutator of these operators is given by
\[\begin{aligned}
(\mathcal{S}\mathcal{A}-\mathcal{A}\mathcal{S})g_j=&\frac{1}{2}\sum_{k=1}^d\left(\left(\frac{\omegat_{j_k}\omegat_{j_k+2}}{\omegat_{j_k+1}^2}-\frac{\omegat_{j_k+1}^2}{\omegat_{j_k}\omegat_{j_k+2}}\right)g_{j+2e_k}+\left(\frac{\omegat_{j_k}\omegat_{j_k-2}}{\omegat_{j_k-1}^2}-\frac{\omegat_{j_k-1}^2}{\omegat_{j_k}\omegat_{j_k-2}}\right)g_{j-2e_k}\right)\\
&+\sum_{k=1}^d\left(\frac{8j_k^2\lambda^2\omegat_{j_k}^4}{\omegat_{j_k+1}^2\omegat_{j_k-1}^2}-8j_k^2\lambda^2-\frac{\omegat_{j_k-1}\omegat_{j_k+1}}{\omegat_{j_k}^2}+\frac{\omegat_{j_k}^2}{\omegat_{j_k-1}\omegat_{j_k+1}}\right)g_j.
\end{aligned}\]

Thus, the expression we want to be positive is $\langle[\mathcal{S},\mathcal{A}]g,g\rangle$, which after some calculations is
\[\begin{aligned} 
&\sumzd\sum_{k=1}^d\left(\frac{\omegat_{j_k}^2}{2\omegat_{j_k+1}\omegat_{j_k-1}}-\frac{\omegat_{j_k-1}\omegat_{j_k+1}}{2\omegat_{j_k}^2}\right)\left|g_{j+e_k}-g_{j-e_k}\right|^2\\&+\sumzd\sum_{k=1}^d\left(\frac{8j_k^2\lambda^2\omegat_{j_k}^4}{\omegat_{j_k+1}^2\omegat_{j_k-1}^2}-8j_k^2\lambda^2+\frac{\omegat_{j_k}^2}{\omegat_{j_k-1}\omegat_{j_k+1}}-\frac{\omegat_{j_k-1}\omegat_{j_k+1}}{\omegat_{j_k}^2}-\frac{\omegat_{j_k-1}^2}{2\omegat_{j_k-2}\omegat_{j_k}}\right.\\&\left.+\frac{\omegat_{j_k-2}\omegat_{j_k}}{2\omegat_{j_k-1}^2}-\frac{\omegat_{j_k+1}^2}{2\omegat_{j_k}\omegat_{j_k+2}}+\frac{\omegat_{j_k+2}\omegat_{j_k}}{2\omegat_{j_k+1}^2}\right)|g_j|^2.
\end{aligned}\]

Notice that for each $k$, the expressions that appear multiplying $\left|g_{j+e_k}-g_{j-e_k}\right|^2$ and $|g_j|^2$ are exactly the same, so once we prove that this is positive in one dimension, it is straightforward to prove it in the general case, so we restrict ourselves to the one dimensional version of the commutator. The first sum is positive by Amos inequality, \cite[p.~269]{am} or \cite[(1.9)]{tn}: $I_j^2(x)-I_{j+1}(x)I_{j-1}(x)>0$ for $x>0$ and $j\ge -1$. Notice that since $j$ is an integer, $I_{-j}(x)=I_j(x)$. Hence, it remains to prove that the second sum is positive when $j\in\mathbb{N}\cup\{0\}$. To simplify, we will consider $x=\frac{1}{2\lambda}$ and divide the expression of the second sum by $4\lambda^2$. That implies that if we are able to prove the following property for modified Bessel functions:
\begin{equation}\begin{aligned}\label{eq1}
\frac{2j^2I_j^4(x)}{I_{j+1}^2(x)I_{j-1}^2(x)}&-2j^2+x^2\left(\frac{I_j^2(x)}{I_{j-1}(x)I_{j+1}(x)}-\frac{I_{j+1}(x)I_{j-1}(x)}{I_j^2(x)}+\frac{I_{j+2}(x)I_{j}(x)}{2I_{j+1}^2(x)}\right.\\&\left.-\frac{I_{j+1}^2(x)}{2I_{j}(x)I_{j+2}(x)}+\frac{I_{j}(x)I_{j-2}(x)}{2I_{j-1}^2(x)}-\frac{I_{j-1}^2(x)}{2I_{j-2}(x)I_{j}(x)}\right)>0,
\end{aligned}\end{equation}
for $j\in\mathbb{N}\cup 0$ and $x>0$, then the convexity will hold.

Since $f(x)=x^{1/2}I_j(x)$ satisfies the equation
\[
f''(x)-\frac{j^2-1/4}{x^2}f(x)-f(x)=0,
\]
we see two behaviors, when $x$ is large enough and $x$ is small enough (with respect to $j$). Note that in the case of the Bessel function of the first kind $J_j(x)$ there is a cancellation term in the equation that gives another behavior.

These behaviors are given by the asymptotics (first for $x$ small enough, then for $x$ large enough)
\[
I_j(x)\sim \frac{(x/2)^j}{j!},
\]
\[
I_j(x)\sim \frac{e^x}{\sqrt{2\pi x}}\left(1-\frac{4j^2-1}{8x}+\frac{(4j^2-1)(4j^2-9)}{2!(8x)^2}-\frac{(4j^2-1)(4j^2-9)(4j^2-25)}{3!(8x)^3}\right).
\]

Therefore, when $x$ is small enough (with respect to $j$)
\[
(\ref{eq1})\sim 2(2j+1)-x^2\frac{2j+1}{(j-1)j(j+1)(j+2)}\ge \frac{(2j+1)(2j^3+4j^2-3j-4)}{(j-1)(j+1)(j+2)}\ge 0,
\]
if $j\ge 2$. The cases $j=0,1$ follow a similar argument but the calculations are slightly different.

On the other hand, if $x$ is large enough the best way to treat (\ref{eq1}) is writing it in the form of a quotient of two expressions involving modified Bessel functions. In the denominator we will have a product of modified Bessel functions (which are positive functions). Moreover, since the degree in both numerator and denominator is the same, we can avoid the term $\frac{e^x}{\sqrt{2\pi x}}$ in the asymptotic expansion.

If we make all the calculations, and only consider the leading term in the expression, we see that it behaves like
\[
\frac{2+8j^2}{x^3}\ge0.
\]

Thus, heuristically we have seen that it makes sense to think that (\ref{eq1}) is positive.

To give a rigorous proof of this, we will use rational bounds for modified Bessel functions in order to reduce the positivity of our expression to the positivity of a quotient of two polynomials. We will need to treat separately three different cases. For the sake of readability, we avoid the calculations here because of the large numbers involved, since the degree of the polynomials involved is quite high. In order to manage all these computations, we use \textit{Mathematica} as a useful tool to substitute coefficients of the polynomials according to the regions we are studying in each case.

\textbf{First case:} $j\ge 17$, $0< x\le j^{3/2}$. In this region we use the following Tur\'anian estimate, which is an immediate consequence of \cite[Theorem 1]{ba}:
\[
\frac{j+1/2}{(j+1)\sqrt{x^2+(j+1/2)^2}}I_j^2(x)<I_j^2(x)-I_{j-1}(x)I_{j+1}(x)<\frac{1}{x+2}I_j^2(x).
\]
 
 Using these bounds, after some computations we see that the positivity of \eqref{eq1} depends on the positivity of an expression of the following kind,
\[
(1+2j)p(j,x)-(1+j)\sqrt{1+4j+4j^2+4x^2}q(j,x),
\] 
where $p$ and $q$ are polynomials of positive coefficients whose degrees, as polynomials in $x$, are 6 and 5 respectively. Writing the difference $(1+2j)^2p^2(j,x)-(1+j)^2(1+4j+4j^2+4x^2)q^2(j,x)$ as a polynomial in $x$ (of degree 12) whose coefficients are polynomials in $j$, we see that in this region \eqref{eq1} is positive. Indeed, we look at the sign of the coefficient of highest degree in $x$, and when it is negative we use $x^2\le j^3$ to reduce the degree of the polynomial. Iterating this argument when necessary we see that the polynomial is positive.

\textbf{Second case:} $0\le j\le 17, x>0$. Now we use the rational bounds in \cite[Theorem 2 and Theorem 3]{na} to treat this case. There, the author gives a method to generate upper and lower bounds for the ratio $\frac{I_{j+1}(x)}{I_j(x)}$ based on the completely monotonicity of the function $x^{-j}e^{-j}I_j(x)$. More precisely, he proves that there are rational polynomials such that
\begin{equation}\label{eqna}
L_{j,k,m}(x)<\frac{I_{j+1}(x)}{I_j(x)}<U_{j,k,m}(x),
\end{equation}
giving also a method to compute these polynomials $L$ and $U$. Now,  setting $k=5$ and $m=0$, we get a rational bound for \eqref{eq1}. We can easily check that the denominator of the quotient is positive, and, for $j\le 17$, all the coefficients of the polynomial that appears in the numerator (of degree 39 in $x$) of the quotient are positive, so \eqref{eq1} is positive in this case.

\textbf{Third case:} $j\ge 17, x\ge j^{3/2}$. Here we use again (\ref{eqna}), with $k=5$, $m=0$. As we have pointed out in the second case, the denominator of the rational bound is positive, but now when $j$ is large some negative coefficients appear in the numerator. Again, the way to prove the positivity is to collect the coefficients in $x$ powers and look at the sign of the coefficient of highest degree in $x$. Now the coefficient is positive if $j\ge 17$, and using $x^2\ge j^3$ we reduce the degree of the polynomial and start again this process.

Hence, the separate study of \eqref{eq1} in these three cases gives us the positivity of the commutator and by Lemma 2.1 we get the log-convexity of $F(t)$, formally. If we can justify this formal argument, then the theorem holds.

In order to justify all the calculations, what we want to check is that
\[
F_\alpha(t)=\sum_{j=-\infty}^\infty\frac{|u_j(t)|^2}{I_j^2(\alpha)} 
\]
is well defined for $t\in(0,1)$, provided that $F_\alpha(0)+F_\alpha(1)<+\infty$ and $\alpha>0$. Once we prove this, we conclude that the formal calculations are valid and then we have that the log-convexity holds.

As a first step, we prove a weaker result that says that a solution bounded at two times by the modified Bessel function is still bounded at any time between them. Then, from this $\ell^\infty$ result we get the $\ell^2$ result we want.
\begin{prop}
Assume that $u=(u_j)_j$ is a solution to \eqref{eqsch} when $d=1$ such that
\[
|u_j(0)|+|u_j(1)|\le CI_j(\alpha),\ \forall j\in\mathbb{Z},
\]
for some $\alpha>0$. Then there is $C>0$ such that, for $t\in(0,1)$, $|u_j(t)|\le C\left(\frac{1}{t}+\frac{1}{1-t}\right)\frac{I_j(\alpha)}{\sqrt{|j|}}$.
\end{prop}

\begin{proof}[Proof of Proposition 2.1]

We consider that $u_j(t)=\hat{f}(j,t)$ for a $2\pi$-periodic function $f$. The evolution of $f(x,t)$ is given by
\[
f(x,t)=e^{2it(\cos x-1)}f(x,0)=e^{2i(t-1)(\cos x-1)}f(x,1).
\]

Moreover, since $|\hat{f}(j,0)|\le CI_j(\alpha)$, we have that $f(x,0)$ is extended to an entire and $2\pi$-periodic function
\[
f(z,0)=\sum_{j=-\infty}^\infty \hat{f}(j,0)e^{ijz},\ \ z=x+iy,
\]
so $f(x,t)$ inherits these properties for all time. Furthermore, using that
\[
\sum_{j=-\infty}^\infty I_j(\alpha)e^{-jy}=e^{\alpha \cosh y},
\]
we conclude that $|f(z,0)|+|f(z,1)|\le Ce^{\alpha \cosh y}$ for all $z=x+iy$. Hence
\[\begin{aligned}
|f(x-iy,t)|\le\left\{\begin{array}{l} Ce^{-2t\sin x\sinh y+\alpha\cosh y},\\ Ce^{-2(t-1)\sin x\sinh y+\alpha\cosh y}.\end{array}\right.
\end{aligned}\]

Since we want something that behaves better than $e^{\alpha\cosh y}$, we are going to use, for $y\ge0$, the first line when $\sin x$ is positive, that is, when $x\in[0,\pi].$ On the other hand the second line will be useful when $\sin x$ is negative, that is, when $x\in[-\pi,0]$.

We have to distinguish between $j$ positive and $j$ negative, although the procedure we follow is the same. The quantity we have to look at is
\[
\hat{f}(j,t)=\int_{-\pi}^\pi f(x,t)e^{-ijx}\,dx,
\]
and we have to see that this quantity is controlled by $I_j(\alpha)$. For $j$ positive, we integrate the function over the contour described in Figure \ref{fig1}, observing that the integral over the vertical lines vanishes due to the periodicity. Thus we see that, thanks to Cauchy's theorem

\begin{figure}
\centering
\begin{tikzpicture}[domain=0:4]
\draw[-, line width=2pt] (8,0) node[left] {$-\frac{\pi}{2}-\theta-iy$} -- (10,0);
\draw[->, line width=2pt] (8,3) node[left] {$-\frac{\pi}{2}-\theta$} -- (10,3) ;
\draw[<-, line width=2pt] (10,0)  -- (12,0) node[right] {$\frac{3\pi}{2}-\theta-iy$};
\draw[-,line width=2pt] (10,3)  -- (12,3) node[right] {$\frac{3\pi}{2}-\theta$};
\draw[->,line width=2pt] (8,0) --(8,1.5);
\draw[-,line width=2pt] (8,1.5) --(8,3);
\draw[->,line width=2pt] (12,3) --(12,1.5);
\draw[-,line width=2pt] (12,1.5) --(12,0);
\end{tikzpicture}
\caption{Contour integration}\label{fig1}
\end{figure}
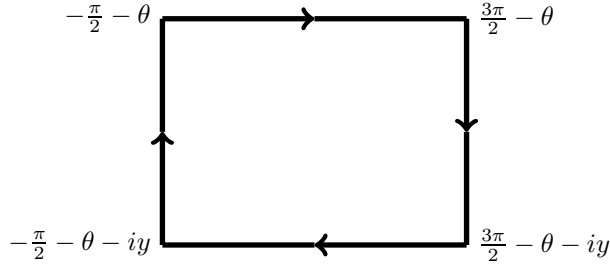
\[
\hat{f}(j,t)=\int_{-\pi}^{\pi}f(x-iy,t)e^{-ij(x-iy)}\,dx,\ \ \forall y>0.
\]

Now we split up the integral in order to use the bounds for $|f(x-iy,t)|$. Thus,
\[
|\hat{f}(j,t)|\le Ce^{\alpha\cosh y-jy}\left(\int_{-\pi}^{0}e^{-2(t-1)\sinh y\sin x}\,dx+\int_{0}^{\pi}e^{-2t\sinh y\sin x}\,dx\right).
\]

We can write each integral as a difference between a modified Bessel function of the first kind and a modified Struve function, both of order zero, having that
\[
|\hat{f}(j,t)|\le C e^{\alpha\cosh y-jy}\pi\big(I_0(2(1-t)\sinh y)-{\bf L}_0(2(1-t)\sinh y)+I_0(2t\sinh y)-{\bf L}_0(2t\sinh y)\big).
\]

Following the theory in \cite[\textsection 10.42]{wa}, we see that
\[
\pi(I_0(s)-{\bf L}_0(s))=\frac{2}{s}+R, \text{ where } |R|\le \frac{16}{s^3}.
\]

Hence, using this expression we obtain that for $y$ large enough,
\[
|\hat{f}(j,t)|\le C e^{\alpha\cosh y-jy}\left(\frac{3}{2(1-t)\sinh y}+\frac{3}{2t\sinh y}\right)\le C_{t}\frac{e^{\alpha\cosh y-jy}}{2\sinh y}.
\]

On the other hand, from \cite[Ch. 10, \textsection 7]{ol}, we have
\begin{equation}\label{asymb}
\left|\frac{\sqrt{2\pi j}(1+\alpha^2/j^2)^{1/4}I_j(\alpha)}{e^{j\sqrt{1+\alpha^2/j^2}-j \arcsinh(j/\alpha)}}-1\right|\le\frac{3}{5j},
\end{equation}
so, for $j$ large enough we have
\[
I_j(\alpha)\ge \frac{e^{j\sqrt{1+\alpha^2/j^2}-j \arcsinh(j/\alpha)}}{2\sqrt{2\pi j}(1+\alpha^2/j^2)^{1/4}}.
\]

Thus, if we set $y=\arcsinh(j/\alpha)$ (that tends to infinity when $j$ tends to infinity), we have that for $j$ large enough,
\[
|\hat{f}(j,t)|\le C_{t}\frac{e^{\alpha\sqrt{1+j^2/\alpha^2}-j\arcsinh(j/\alpha)}}{2j/\alpha}=C_{t}\frac{e^{j\sqrt{1+\alpha^2/j^2}-j\arcsinh(j/\alpha)}}{2j/\alpha}\le C_{t,\alpha}\frac{I_j(\alpha)}{\sqrt{j}},
\]
since $\sqrt{j}$ and $\sqrt{j}(1+1/j^2)^{1/4}$ behave in the same way as $j$ grows. If $j$ is negative, we use the same argument but instead of integrating the function $f(x-iy,t)$ we integrate $f(x+iy,t)$ over a similar contour and then we take $y=\arcsinh(|j|/\alpha)$.
\end{proof}
Since $\frac{1}{j}$ is not a summable function, we cannot use this proposition directly to justify these calculations. Nevertheless, we have that this implies that
\[
F_{\alpha+\epsilon}(t)=\sum_{j=-\infty}^\infty\frac{|u_j(t)|^2}{I_j^2(\alpha+\epsilon)}\le C\sum_{j=-\infty}^\infty\frac{I_j^2(\alpha)}{I_j^2(\alpha+\epsilon)}<+\infty.
\]

That the last sum is finite can be seen using the bounds in \cite[(4)]{ro}, and the same can be done for the time derivatives of $F_{\alpha+\epsilon}$. Hence we have that for $F_{\alpha+\epsilon}$ the formal calculations are correct, so it is a log-convex function for all $\epsilon>0$. Notice that since the constant in Proposition 2.1 blows up at $t=0$ and $t=1$, first we prove the log-convexity of $F_{\alpha+\epsilon}$ in an interval of the form $[t_0,t_1]\subset (0,1)$ and then, by using the convolution expression for the solution,
\[
u_j(t)=e^{-2it}\sum_{m=-\infty}^\infty u_m(0) I_{k-m}(2it)=e^{-2i(t-1)}\sum_{m=-\infty}^\infty u_m(1) I_{k-m}\big(2i(t-1)\big),
\]
we can let $t_0$ tend to 0 and $t_1$ tend to 1 to conclude the log-convexity in $[0,1]$ In other words, we have that
\[\begin{aligned}
\sum_{j=-\infty}^\infty\frac{|u_j(t)|^2}{I_j^2(\alpha+\epsilon)}&\le \left(\sum_{j=-\infty}^\infty \frac{|u_j(0)|^2}{I_j^2(\alpha+\epsilon)}\right)^{1-t}\left(\sum_{j=-\infty}^\infty \frac{|u_j(1)|^2}{I_j^2(\alpha+\epsilon)}\right)^{t}\\&\le  \left(\sum_{j=-\infty}^\infty \frac{|u_j(0)|^2}{I_j^2(\alpha)}\right)^{1-t}\left(\sum_{j=-\infty}^\infty \frac{|u_j(1)|^2}{I_j^2(\alpha)}\right)^{t}.
\end{aligned}
\]

Finally, by Fatou's lemma,
\[
\sum_{j=-\infty}^\infty\frac{|u_j(t)|}{I_j^2(\alpha)}\le \lim_{\epsilon\rightarrow0}\sum_{j=-\infty}^\infty\frac{|u_j(t)|^2}{I_j^2(\alpha+\epsilon)}\le  \left(\sum_{j=-\infty}^\infty \frac{|u_j(0)|^2}{I_j^2(\alpha)}\right)^{1-t}\left(\sum_{j=-\infty}^\infty \frac{|u_j(1)|^2}{I_j^2(\alpha)}\right)^{t},
\]
so the theorem holds. The same method can be used to justify the formal calculations in the general $d$-dimensional case.
\end{proof}
Now, as we have pointed out in the introduction, we have  other interpretations of Gaussian decay, so let us consider the solution to the adjoint equation that solves the modified Bessel function $I_j(x)$,
\[
\lambda j_k z_j-(z_{j+e_k}-z_{j-e_k})=0,\ \ j\in\mathbb{Z}^d.
\]

It is a simple computation to check that now the weight is given in terms of modified Bessel functions of the second kind $K_j(x)$. Using this weight, we have the following result:

\begin{teo}
Assume $u=(u_j)_{j\in\mathbb{Z}^d}$ is a solution to \eqref{eqsch} which satisfies
\begin{equation}\label{decaybessk}
\sumzd z_j^2|u_j(0)|^2+\sumzd z_j^2|u_j(1)|^2<+\infty,
\end{equation}
where $z_j=C_{d,\lambda}\prod_{k=1}^d K_{j_k}(1/2\lambda)$ for some $\lambda>0$. Then
\[
H(t)=\sumzd z_j^2|u_j(t)|^2 \text{ is logarithmically convex}.
\]
\end{teo}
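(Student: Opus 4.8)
The plan is to mirror the proof of Theorem 2.1, exploiting the fact that the second-kind weight is built from $K$ by multiplication rather than division. First I would set $f_j(t)=z_ju_j(t)$, so that $H(t)=\sumzd z_j^2|u_j(t)|^2=\langle f,f\rangle$, and check that $f$ solves $\partial_tf_j=\mathcal{S}f_j+\mathcal{A}f_j$ for a time-independent symmetric $\mathcal{S}$ and skew-symmetric $\mathcal{A}$. Writing $\zt_n=K_n(1/2\lambda)$ and conjugating the discrete Laplacian by the diagonal multiplier $z_j$, the coefficients of the shifts $f_{j\pm e_k}$ are the ratios $\zt_{j_k}/\zt_{j_k\pm1}$, i.e. the reciprocals of the ratios that appear for the first-kind weight. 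A direct comparison then shows that, if one writes the operators of Theorem 2.1 with $K$ in place of $I$ (call them $\mathcal{S}^{(K)},\mathcal{A}^{(K)}$), one gets exactly $\mathcal{A}=\mathcal{A}^{(K)}$, while $\mathcal{S}$ only picks up an overall sign, $\mathcal{S}=-\mathcal{S}^{(K)}$ (the antisymmetric combination $\zt_{j_k}/\zt_{j_k+1}-\zt_{j_k+1}/\zt_{j_k}$ changes sign under the swap, whereas the symmetric combination does not). Consequently $[\mathcal{S},\mathcal{A}]=-[\mathcal{S}^{(K)},\mathcal{A}^{(K)}]$, so by Lemma 2.1 it suffices to prove that this commutator is nonnegative, i.e. that the $K$-analogue of the quadratic form computed in Theorem 2.1 is \emph{nonpositive}.

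After the same summation by parts that produced the $|g_{j+e_k}-g_{j-e_k}|^2$ and $|g_j|^2$ blocks in Theorem 2.1, the problem reduces, in one dimension, to showing that \eqref{eq1} with every $I_m(x)$ replaced by $K_m(x)$ is $\le0$ for $x>0$ and $j\in\mathbb{N}\cup\{0\}$. Here the $2j^2$ terms are unchanged, since they come from the square of the recurrence coefficient $2j/x$, which is common to both kinds (the recurrences for $I_j$ and $K_j$ differ by a sign that enters these diagonal terms squared, so the algebraic shape is identical). For the term multiplying $|g_{j+e_k}-g_{j-e_k}|^2$ the required sign is immediate from the reversed Turán inequality $K_j^2(x)<K_{j-1}(x)K_{j+1}(x)$, valid for all $x>0$ (in contrast to Amos' inequality for $I$). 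For the remaining diagonal term I would run the same three-region scheme as in Theorem 2.1, replacing the Amos/N\aa sell/Baricz bounds for $I_{j+1}/I_j$ by the corresponding rational bounds for $K_{j+1}(x)/K_j(x)$ (available from continued-fraction expansions or from monotonicity properties of ratios of $K$), and checking the sign of the leading coefficient in $x$ in each region; the asymptotics $K_j(x)\sim\tfrac12\Gamma(j)(x/2)^{-j}$ for small $x$ and $K_j(x)\sim\sqrt{\pi/2x}\,e^{-x}$ for large $x$ give the heuristic that the expression is indeed negative, matching the needed sign.

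The delicate point is the justification of the formal computation. As in Theorem 2.1 it only becomes meaningful once we know $H(t)<\infty$ on $(0,1)$, together with enough decay to discard boundary terms at infinity in the summation by parts. The difficulty is genuinely more serious here: since $K_j$ grows super-exponentially, the hypothesis $\sumzd z_j^2|u_j|^2<\infty$ forces $u_j$ to decay faster than any geometric sequence, and putting absolute values inside the convolution $u_j(t)=e^{-2it}\sum_m u_m(0)I_{j-m}(2it)$ overestimates $|u_j(t)|$ so badly that it fails to give even $H(t)<\infty$; finiteness must come from cancellation. I would therefore prove a $K$-analogue of Proposition 2.1, representing $u_j(t)=\hat{f}(j,t)$ with $f(x,t)=e^{2it(\cos x-1)}f(x,0)$, where now $f(\cdot,0)$ is \emph{entire} (the fast decay makes the Fourier series converge on all of $\mathbb{C}$), and shifting the contour to $\{\Im z=\mp y\}$ to bound $|\hat{f}(j,t)|$ after optimizing in $y$. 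The new feature is that $f$ grows doubly exponentially in $y$, so the optimal $y$ and the resulting bound have to be recomputed from the integral representation of $K_j$ rather than from the clean identity $\sum_jI_j(\alpha)e^{-jy}=e^{\alpha\cosh y}$ that was available for the first kind. Granting a pointwise bound of the form $|u_j(t)|\lesssim 1/K_j(\beta')$ with $\beta'$ slightly below $1/2\lambda$, an $\epsilon$-room argument together with Fatou's lemma — exactly as at the end of the proof of Theorem 2.1 — upgrades the $\ell^2$ log-convexity to the full weight. I expect this contour estimate to be the main obstacle, precisely because there is no closed generating function for $K_j$; the algebraic reduction and the Bessel inequality run parallel to the first-kind case, while the sharp pointwise decay does not. (Alternatively, one may invoke the perturbative method of Section 3 with $V\equiv0$, which bypasses the complex-analytic estimate entirely.)
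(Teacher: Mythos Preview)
Your proposal is correct and follows essentially the same route as the paper. The sign-flip observation $\mathcal{S}=-\mathcal{S}^{(K)}$, $\mathcal{A}=\mathcal{A}^{(K)}$, hence $[\mathcal{S},\mathcal{A}]=-[\mathcal{S}^{(K)},\mathcal{A}^{(K)}]$, is a clean way to arrive at the inequality; the paper computes the commutator directly but lands on the equivalent statement $\Lambda_j(x)\ge0$, which is exactly the negative of \eqref{eq1} with $I\to K$. The justification is also as you describe: the paper states a $K$-analogue of Proposition~2.1 (their Proposition~2.2, same contour-shifting argument) and closes with the $\epsilon$-room/Fatou step.

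The only substantive difference is in the Bessel-inequality bookkeeping. Rather than porting the three-region Amos/N{\aa}sell scheme from Theorem~2.1, the paper handles $j=0$ and $j=1$ by hand (via Baricz' Tur\'anian bound, the complete monotonicity of $e^xK_0$ and $e^xK_1$, and a Segura bound) and then, for $j\ge2$, splits at $x=3j/2$ using the two-sided Tur\'anian bounds
\[
\frac{1}{1+\tfrac1x}\le \frac{K_v^2(x)}{K_{v-1}(x)K_{v+1}(x)}\le \frac{1}{1+\tfrac1x-\tfrac{v^2-1/4}{x^3}}\quad\text{and}\quad \frac{K_v^2}{K_{v-1}K_{v+1}}\le\frac{1}{1+\bigl(v-\tfrac12+\sqrt{x^2+(v-\tfrac12)^2}\bigr)^{-1}}.
\]
This two-region approach is lighter than the three-region scheme and avoids the very high-degree polynomials that the $I$-case needed; your plan would also work in principle, but the cleaner $K$-Tur\'anians make the computation noticeably shorter here.
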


As before, we are going to prove the log-convexity formally. In order to justify the calculations, we can argue in the same fashion as in Theorem 2.1, now proving the following one dimensional result (whose proof is based on the same arguments that we have used to prove Proposition 2.1):

\begin{prop}
Assume that a solution to the 1d discrete Schr\"odinger equation \eqref{eqsch} satisfies, $\forall j\in\mathbb{Z}$, $K_j(\alpha)(|u_j(0)|+|u_j(1)|)<C$, for some $C>0$ and $\alpha>0$. Then, for $t\in(0,1)$ we have
\[
K_j(\alpha)|u_j(t)|\le C_{\alpha}\left(\frac1t+\frac{1}{1-t}\right)\frac{1}{\sqrt{|j|}}, \ \text{if}\ j\ \text{is large enough}.
\]
\end{prop}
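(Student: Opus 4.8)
The plan is to imitate the proof of Proposition 2.1 almost verbatim, the only genuinely new ingredient being the passage from the weight $1/K_j(\alpha)$ to the weight $I_j(\alpha)$. I would begin exactly as before: write $u_j(t)=\hat f(j,t)$ for a $2\pi$-periodic $f$, recall the evolution $f(x,t)=e^{2it(\cos x-1)}f(x,0)=e^{2i(t-1)(\cos x-1)}f(x,1)$, and aim to control $\hat f(j,t)=\int_{-\pi}^\pi f(x,t)e^{-ijx}\,dx$ by shifting the contour to $\{x-iy\}$ (for $j>0$; to $\{x+iy\}$ for $j<0$) and optimizing in $y$. The first task is to translate the hypothesis into a statement about $I_j$. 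For this I would invoke the large-order asymptotics underlying \eqref{asymb}, which yield the companion estimate $I_j(\alpha)K_j(\alpha)\sim\frac{1}{2j\sqrt{1+\alpha^2/j^2}}$, so that $\frac{1}{K_j(\alpha)}\le C|j|I_j(\alpha)$ for every $j\neq 0$, the finitely many small indices (in particular $j=0$) being absorbed by the boundedness assumption. Hence $K_j(\alpha)(|u_j(0)|+|u_j(1)|)<C$ gives $|u_j(0)|+|u_j(1)|\le C|j|I_j(\alpha)$.

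Next I would produce the analytic bound on $f$, and this is the one place where the estimate departs from Proposition 2.1. Writing $|f(z,0)|\le\sum_j|\hat f(j,0)|e^{-jy}\le C\sum_j|j|I_j(\alpha)e^{-jy}$ with $z=x+iy$, I would control the last sum using the three-term recurrence $|j|I_j(\alpha)=\frac{\alpha}{2}\big(I_{|j|-1}(\alpha)-I_{|j|+1}(\alpha)\big)\le\frac{\alpha}{2}I_{|j|-1}(\alpha)$ together with the generating identity $\sum_j I_j(\alpha)e^{-jy}=e^{\alpha\cosh y}$. Shifting indices term by term then gives $\sum_j|j|I_j(\alpha)e^{-jy}\le\alpha\cosh y\,e^{\alpha\cosh y}$, so that $|f(z,0)|+|f(z,1)|\le C\cosh y\,e^{\alpha\cosh y}$: a single extra factor $\cosh y$ compared with Proposition 2.1.

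From here I would rerun the remainder of the proof of Proposition 2.1 unchanged. Splitting $\int_{-\pi}^\pi f(x-iy,t)e^{-ij(x-iy)}\,dx$ over $[-\pi,0]$ and $[0,\pi]$, and estimating each piece through the modified Bessel--Struve bound $\pi(I_0(s)-\mathbf{L}_0(s))=\frac{2}{s}+R$ with $|R|\le 16/s^3$, produces $|\hat f(j,t)|\le C\cosh y\,e^{\alpha\cosh y-jy}\big(\frac{1}{t\sinh y}+\frac{1}{(1-t)\sinh y}\big)=C\coth y\,e^{\alpha\cosh y-jy}\big(\frac1t+\frac1{1-t}\big)$ for $y$ large. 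Choosing $y=\arcsinh(j/\alpha)$ turns $\coth y=\sqrt{\alpha^2+j^2}/j$ into a factor tending to $1$, while \eqref{asymb} identifies $e^{\alpha\cosh y-jy}$ with $\sqrt{2\pi j}\,I_j(\alpha)$ up to lower order, so $|u_j(t)|\le C_\alpha\big(\frac1t+\frac1{1-t}\big)\sqrt{j}\,I_j(\alpha)$. Multiplying by $K_j(\alpha)$ and using $I_j(\alpha)K_j(\alpha)\sim\frac{1}{2j}$ collapses this to $K_j(\alpha)|u_j(t)|\le C_\alpha\big(\frac1t+\frac1{1-t}\big)/\sqrt{|j|}$; the case $j<0$ is identical after integrating $f(x+iy,t)$ and taking $y=\arcsinh(|j|/\alpha)$.

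I expect the only real subtlety to be the interplay between the extra $\cosh y$ factor and the $1/\sinh y$ gain from the Struve estimate. Naively the $\cosh y$ looks as though it should worsen the final power of $|j|$, but at the optimal $y=\arcsinh(j/\alpha)$ the two combine into $\coth y\to 1$, and the genuine difference from Proposition 2.1 is carried entirely by the conversion factor $I_j(\alpha)K_j(\alpha)\sim 1/(2j)$. Keeping the bookkeeping of these algebraic prefactors straight, so that the final exponent of $|j|$ comes out as $-1/2$ rather than $+1/2$, is where I would be most careful; the complex-analytic core (the contour shift, the vanishing of the vertical sides by periodicity, and the Struve asymptotics) is a verbatim repeat of Proposition 2.1 and should present no new difficulty.
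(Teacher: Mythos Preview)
Your proposal is correct and follows essentially the route the paper has in mind: the paper omits the proof, saying only that it ``is based on the same arguments that we have used to prove Proposition 2.1,'' and your reduction---converting the $K_j$ hypothesis to an $I_j$ bound via $I_j(\alpha)K_j(\alpha)\sim 1/(2j)$, rerunning the contour-shift and Bessel--Struve argument with the harmless extra factor $\cosh y$, and converting back at the end---is precisely such a reuse. Your bookkeeping of the prefactors (in particular $\sum_j |j|I_j(\alpha)e^{-jy}\le \alpha\cosh y\,e^{\alpha\cosh y}$ via the recurrence, and the collapse $\sqrt{j}\,I_jK_j\sim 1/(2\sqrt{j})$) is sound.
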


\begin{proof}[Proof of Theorem 2.2]

Again, we set $f_j=z_ju_j$ and carry out all the process to compute $[\mathcal{S},\mathcal{A}]$ in this case. We define $\zt_n=K_n(1/2\lambda)$ for $n\in\mathbb{Z}$, noticing that in the previous theorem we have done this for the inverse of $\zt_j$. Then in this case we have
\[\begin{aligned}
&\langle\left[\mathcal{S},\mathcal{A}\right]g,g\rangle=\sumzd\sum_{k=1}^d\left(\frac{\zt_{j_k+1}\zt_{j_k-1}}{2\zt_{j_k}^2}-\frac{\zt_{j_k}^2}{2\zt_{j_k+1}\zt_{j_k-1}}\right)|g_{j+e_k}-g_{j-e_k}|^2\\&+\sumzd\sum_{k=1}^d\left(\frac{\zt_{j_k+1}^2+\zt_{j_k-1}^2}{2\zt_{j_k}^2}-\frac{\zt_{j_k}^2}{2\zt_{j_k-1}^2}-\frac{\zt_{j_k}^2}{2\zt_{j_k+1}^2}+\frac{\zt_{j_k+1}^2}{2\zt_{j_k}\zt_{j_k+2}}-\frac{\zt_{j_k+2}\zt_{j_k}}{2\zt_{j_k+1}^2}+\frac{\zt_{j_k-1}^2}{2\zt_{j_k}\zt_{j_k-2}}-\frac{\zt_{j_k-2}\zt_{j_k}}{2\zt_{j_k-1}^2}\right)|g_j|^2.
\end{aligned}\]

As before, we only need to prove that the commutator is positive in one dimension. The first sum is positive due to the symmetry $K_{-j}(x)=K_j(x)$ for $j\in\mathbb{N}$ and the inequality $K_j^2(x)<K_{j-1}(x)K_{j+1}(x)$, valid for $j\ge0$ and $x>0$.

On the other hand, the positivity of the coefficients in the second sum is not straightforward. For simplicity, we define $\Lambda_j(x)$ as the $j$-th coefficient in the second sum,
\[\begin{aligned}
\Lambda_j(x)&=\frac{K_{j+1}^2(x)+K_{j-1}^2(x)}{2K_j^2(x)}-\frac{K_j^2(x)}{2K_{j-1}^2(x)}-\frac{K_j^2(x)}{2K_{j+1}^2(x)}+\frac{K_{j+1}^2(x)}{2K_j(x)K_{j+2}(x)}\\&-\frac{K_{j+2}(x)K_j(x)}{2K_{j+1}^2(x)}+\frac{K_{j-1}^2(x)}{2K_j(x)K_{j-2}(x)}-\frac{K_{j-2}(x)K_j(x)}{2K_{j-1}^2(x)},
\end{aligned}\]
and we need to prove that $\Lambda_j(x)>0$ for $j\in\mathbb{N}\cup0$ and $x>0$ . We start proving separately the cases $j=0,1$ and then we study the case $j\ge2.$

To prove that $\Lambda_0(x)\ge0$ we see that this is equivalent to prove  that $\frac{K_1^4(x)}{K_0^3(x)K_2(x)}>1$, which is a consequence of the estimates given in \cite[Theorem 2]{ba}.

If $j=1$, we consider two cases $0<x\le1.1$ and $x\ge1.1$. In the first case we use different estimates for each term in $\Lambda_1(x)$
\[
\frac{K_2(x)}{K_1(x)}>1+\frac{3}{2x},\ \ \frac{K_2^2(x)}{K_1(x)K_3(x)}>\frac{1}{1+\frac{1}{x}},\ \ \frac{K_0(x)}{K_1(x)}>\frac{x}{\frac{1}{2}+\sqrt{x^2+\frac{1}{4}}}.
\]

 The first estimate comes from the differentiation of the function $e^xK_0(x)$, which is a completely monotonic function according to \cite[Theorem 5]{ms} and the third estimate was proved in \cite[Theorem 1]{segu}. As a result, we obtain that $\Lambda_1(x)$ is positive if  $p(x)-\sqrt{1+4x^2}q(x)$ is positive, where $p$ and $q$ are polynomials of degree 6 and 5 in $x$ respectively. As we have done in the previous result for the modified Bessel functions $I_j(x)$, studying the sign of $p^2(x)-(1+4x^2)q(x)^2$, which is a polynomial of degree 11, we obtain the positivity in the first region.
 
In the second region, we consider the following estimates,
\[
\frac{K_2(x)}{K_1(x)}>\frac{8+\frac{8}{x}}{8-\frac{4}{x}+\frac{3}{x^2}},\ \ \frac{K_2(x)}{K_3(x)}>\frac{8+\frac{8}{x}}{8+\frac{28}{x}+\frac{35}{x^2}},\ \ \frac{K_0^2(x)}{K_1^2(x)}>\frac{1}{1+\frac{1}{x}+\frac{1}{4x^3}}.
\]

In this case we use the completely monotonicity of $e^xK_1(x)$ to prove the first and second estimate. The third is given in \cite[Theorem 2]{ba}. Again, this gives a rational bound for $\Lambda_1(x)$ and it is easy to see that both numerator and denominator are positive if $x\ge1.1$. In the case of the denominator, it can be written as a product of positive polynomials. For the numerator, we obtain a polynomial of degree 11 with some negative coefficients, but using $x\ge 1.1$ they can be hidden in the positive coefficients.

If $j\ge2$ we use the recurrence of modified Bessel functions of the second kind
\[
K_{j+1}(x)-K_{j-1}(x)=2\frac{j}{x}K_j(x)
\]
to rewrite $\Lambda_j(x)$ as
\[\begin{aligned}
x^2\Lambda_j(x)=&2j^2-\frac{2j^2K_j^4(x)}{K_{j-1}^2(x)K_{j+1}^2(x)}+x^2\left(\frac{K_{j+1}(x)K_{j-1}(x)}{K_j^2(x)}-\frac{K_j^2(x)}{K_{j+1}(x)K_{j-1}(x)}\right.\\&\left.+\frac{K_{j-1}^2(x)}{2K_{j}(x)K_{j-2}(x)}-\frac{K_{j}(x)K_{j-2}(x)}{2K_{j-1}^2(x)}+\frac{K_{j+1}^2(x)}{2K_{j}(x)K_{j+2}(x)}-\frac{K_{j}(x)K_{j+2}(x)}{2K_{j+1}^2(x)}\right).
\end{aligned}\]

As we have done in the case $j=1$, we split up $x>0$ in two regions.

First, if $x\ge \frac{3j}{2}$, we use the following estimate, given in \cite[Theorem 2]{ba}:
\[
\frac{1}{1+\frac{1}{x}}\le \frac{K_v^2(x)}{K_{v-1}K_{v+1}}\le\frac{1}{1+\frac{1}{x}-\frac{v^2-\frac{1}{4}}{x^3}},\ \ \text{for }v>1/2.
\]

After using these estimates, we obtain that the positivity of $\Lambda_j$ depends on the positivity of a polynomial of degree 7 in the variable $x$. Studying the sign of the coefficient of highest degree (which is going to be positive) and using $x\ge 3j/2$ to reduce the degree of the polynomial, we obtain that in this region $\Lambda_j(x)\ge0$ after some iterations of this argument. Finally, if $0<x\le\frac{3j}{2}$, we change the upper bound, using \cite[Corollary 1]{segu},
\[
\frac{1}{1+\frac{1}{x}}\le \frac{K_v^2(x)}{K_{v-1}K_{v+1}}\le\frac{1}{1+\frac{1}{v-\frac{1}{2}+\sqrt{x^2+(v-\frac{1}{2})^2}}},\ \ \text{for }v>1/2.
\]

Now the positivity of $\Lambda_j$ depends on the positivity of an expression of the kind $p_j(x)+\sqrt{1-4j+4j^2+4x^2}q_j(x)$, where $p_j$ and $q_j$ are again polynomials of degree 5 and 4 in $x$. On the one hand, using the same strategy as before, we can see that $p_j$ and $q_j$ are positive polynomials if $j\ge 2,\ 0<x\le j$. When $x\ge j$ we compute the difference $p_j^2-(1-4j+4j^2+4x^2)q_j^2$, which is a polynomial of degree 9 in $x$, and we see that this difference is positive (in order to use the same argument as before, we have to distinguish $j\le x\le 4j/3$ and $4j/3\le x\le 3j/2$). Thus, there is no change of sign in  $p_j(x)+\sqrt{1-4j+4j^2+4x^2}q_j(x)$ and this implies that $\Lambda_j(x)\ge0$ as we want.

This completes the proof of $\Lambda_j(x)\ge0$, $\forall j\in\mathbb{N}\cup0,\ x>0$, and therefore we have that the commutator of the operators $\mathcal{S}$ and $\mathcal{A}$ is positive, giving as a result the log-convexity of the desired quantity provided that all the quantities involved are finite, by using Proposition 2.2.
\end{proof}
We can simplify more the discrete interpretation of the Gaussian decay, and use the weight function $e^{\lambda |j|^2}$, having the following result:

\begin{teo}
Assume $u=(u_j)_{j\in\mathbb{Z}^d}$ is a solution to the equation \eqref{eqsch} which satisfies
\begin{equation}\label{decaygauss}
\sumzd e^{2\lambda |j|^2}|u_j(0)|^2+\sumzd e^{2\lambda |j|^2}|u_j(1)|^2<+\infty,
\end{equation}
for some $\lambda>0$. Then
\[
G(t)=\sumzd e^{2\lambda |j|^2}|u_j(t)|^2 \text{ is logarithmically convex}.
\]
\end{teo}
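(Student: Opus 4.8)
The plan is to follow the proof of Theorem 2.1 verbatim in its structure; only the commutator algebra and the complex--analytic justification change. First I would set $f_j(t)=e^{\lambda|j|^2}u_j(t)$ and conjugate \eqref{eqsch} by the weight. Using $|j\pm e_k|^2=|j|^2\pm 2j_k+1$, a direct computation gives $\partial_tf_j=\mathcal{S}f_j+\mathcal{A}f_j$ with
\[
\mathcal{S}g_j=i\sum_{k=1}^d\big(\sinh(\lambda(2j_k-1))g_{j-e_k}-\sinh(\lambda(2j_k+1))g_{j+e_k}\big),
\]
\[
\mathcal{A}g_j=i\sum_{k=1}^d\big(\cosh(\lambda(2j_k+1))g_{j+e_k}-2g_j+\cosh(\lambda(2j_k-1))g_{j-e_k}\big),
\]
and one checks that $\mathcal{S}$ is symmetric and $\mathcal{A}$ skew-symmetric for the $\ell^2(\mathbb{Z}^d)$ inner product. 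Since both operators and the weight factor over the $d$ coordinates, the commutator is a sum over $k$ of identical one-dimensional pieces, so by Lemma 2.1 it suffices to prove $\langle[\mathcal{S},\mathcal{A}]g,g\rangle\ge0$ in dimension one.

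Writing $\sigma_j=\sinh(\lambda(2j+1))$ and $\gamma_j=\cosh(\lambda(2j+1))$, the identities $\sigma_j\gamma_{j+1}-\gamma_j\sigma_{j+1}=-\sinh(2\lambda)$ and $2\sigma_j\gamma_j-2\sigma_{j-1}\gamma_{j-1}=\sinh(\lambda(4j+2))-\sinh(\lambda(4j-2))=2\sinh(2\lambda)\cosh(4\lambda j)$ make the one-dimensional commutator collapse to $[\mathcal{S},\mathcal{A}]g_j=\sinh(2\lambda)\big(2\cosh(4\lambda j)g_j-g_{j+2}-g_{j-2}\big)$, so that
\[
\langle[\mathcal{S},\mathcal{A}]g,g\rangle=2\sinh(2\lambda)\Big(\sumz\cosh(4\lambda j)|g_j|^2-\Re\sumz g_{j+2}\overline{g_j}\Big).
\]
The positivity is now elementary: $\Re\sumz g_{j+2}\overline{g_j}\le\tfrac12\sumz(|g_{j+2}|^2+|g_j|^2)=\sumz|g_j|^2\le\sumz\cosh(4\lambda j)|g_j|^2$ because $\cosh\ge1$, and $\sinh(2\lambda)>0$. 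In contrast to Theorems 2.1 and 2.2, no Bessel estimates or computer algebra are needed here; the Gaussian weight diagonalizes the problem into a weakly diagonally dominant Jacobi form. Lemma 2.1 then yields the formal log-convexity of $G(t)$.

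It remains to justify the computation, i.e. to prove $G(t)<+\infty$ for $t\in(0,1)$, and for this I would establish a one-dimensional pointwise bound in the exact spirit of Proposition 2.1. Writing $u_j(t)=\hat f(j,t)$ for the $2\pi$-periodic $f(x,t)=e^{2it(\cos x-1)}f(x,0)=e^{2i(t-1)(\cos x-1)}f(x,1)$ and using $\sumz e^{-\lambda j^2+jy}=e^{y^2/4\lambda}\sumz e^{-\lambda(j-y/2\lambda)^2}\le C_\lambda e^{y^2/4\lambda}$, the hypothesis $|u_j(0)|\le Ce^{-\lambda j^2}$ makes $f(\cdot,0)$ extend to an entire $2\pi$-periodic function with $|f(x-iy,0)|\le C_\lambda e^{y^2/4\lambda}$. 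Shifting the contour to height $-y$ and using the first representation on $[0,\pi]$ and the second on $[-\pi,0]$ (so the phase contributes $e^{-2t\sin x\sinh y}$ and $e^{2(1-t)\sin x\sinh y}$ respectively, each with favourable sign) one gets $|u_j(t)|\le C_t e^{y^2/4\lambda-jy}\big(\tfrac1{t\sinh y}+\tfrac1{(1-t)\sinh y}\big)$; optimizing at $y=2\lambda j$ produces $|u_j(t)|\le C_t e^{-\lambda j^2}$ for $j$ large (shifting to $+y$ for $j<0$). Then I would upgrade this to $\ell^2$ by the regularization at the end of Theorem 2.1, but perturbing \emph{downward}: for each $\epsilon>0$ the commutator stays positive for the weight $e^{(\lambda-\epsilon)|j|^2}$, the pointwise bound gives $G_{\lambda-\epsilon}(t)=\sumz e^{2(\lambda-\epsilon)j^2}|u_j(t)|^2\le C_t\sumz e^{-2\epsilon j^2}<+\infty$ (and likewise for its time derivatives), hence $G_{\lambda-\epsilon}$ is genuinely log-convex with $G_{\lambda-\epsilon}(t)\le G(0)^{1-t}G(1)^t$; letting $\epsilon\to0$ and using monotone convergence gives the log-convexity of $G$.

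The main obstacle is this justification step, and specifically the contour integration against the order-two (theta-type) growth $e^{y^2/4\lambda}$ of $f(\cdot,0)$, which is heavier than the $e^{\alpha\cosh y}$ growth handled in Proposition 2.1. The point that rescues the argument is that the Schr\"odinger phase grows only like $e^{\sinh y}$ in the imaginary direction, which is dominated by $e^{y^2/4\lambda}$, so that optimizing the contour height at $y=2\lambda j$ still extracts the sharp Gaussian decay $e^{-\lambda j^2}$. Everything else, including the reduction to one dimension and the passage from the $\ell^\infty$ bound to the $\ell^2$ statement, is routine once the analogue of Proposition 2.1 is in place.
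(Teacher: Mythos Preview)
Your proof is correct and follows the paper's argument closely: the commutator you compute is algebraically identical to the paper's formula \eqref{gausw} (expand $|f_{j+e_k}-f_{j-e_k}|^2$ and reindex to see they coincide), and the downward regularization $\lambda\to\lambda-\epsilon$ followed by monotone convergence is exactly what the paper does.

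The one place where the paper is a bit cleaner is the $\ell^\infty$ justification. Once you have observed---as you do---that after shifting the contour to height $-y$ the Schr\"odinger phase has favourable sign on each of $[0,\pi]$ and $[-\pi,0]$, you may simply bound it by $1$ and obtain $|f(x-iy,t)|\le C_\lambda e^{y^2/4\lambda}$ \emph{uniformly in $t$}; the factor $\tfrac{1}{t\sinh y}+\tfrac{1}{(1-t)\sinh y}$ you carry over from Proposition~2.1 is unnecessary here. Choosing $y=2\lambda j$ then gives $|u_j(t)|\le C_\lambda e^{-\lambda j^2}$ with a constant independent of $t$. This is precisely the paper's Proposition~2.3, and (as the paper's Remark after it notes) it spares you the intermediate step of first proving log-convexity on $[t_0,t_1]\subset(0,1)$ and then pushing the endpoints out. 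Your version with $C_t$ blowing up at $0$ and $1$ still works, but it is a detour.
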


\begin{proof}
Formally, we consider $f_j=e^{\lambda |j|^2}u_j$ and compute the operators $\mathcal{S}$ and $\mathcal{A}$ so that we can apply Lemma 2.1. In this case
\begin{equation}\label{gausw}
\langle [\mathcal{S},\mathcal{A}]f,f\rangle=\sinh(2\lambda)\sumzd\sum_{k=1}^d|f_{j+e_k}-f_{j-e_k}|^2+2\sinh(2\lambda)\sumzd\sum_{k=1}^d \big(\cosh(4\lambda j_k)-1\big)|f_j|^2\ge0.
\end{equation}

In order to justify the formal calculations we need again an $\ell^\infty$ result in one dimension analogous to Proposition 2.1 and Proposition 2.2:
\begin{prop}
Assume that a solution to the discrete Schr\"odinger equation satisfies, $\forall j\in\mathbb{Z},\ |u_j(0)|+|u_j(1)|<Ce^{-\alpha j^2}$, for some $C>0$ and $\alpha>0$. Then, for $t\in(0,1)$ we have $|u_j(t)|\le C_1 e^{-\alpha j^2}$ for all $j\in\mathbb{Z}$, with $C_1$ not depending on $t$.
\end{prop}

\begin{rem}
When considering modified Bessel functions, the constant blows up at time $t=0$ and $t=1$, and this is why first we have to prove the log-convexity in $[t_0,t_1]\subset (0,1)$ and then study what happens if $t_0$ tends to 0 and $t_1$ tends to 1. In this case, the constant is independent of $t$ and we can avoid this step in the justification.
\end{rem}

\begin{proof}[Proof of Proposition 2.3]
Consider that $u_j(t)=\hat{f}(j,t)$. As in Proposition 2.1, thanks to the decay conditions we can extend $f(x,0)$ and $f(x,1)$ as entire functions and they are $2\pi-$periodic. This means that at time $t$, $f(z,t)$ is $2\pi-$periodic and entire, and we recall that it is given by
\begin{equation}\label{eqfga}
f(z.t)=f(x+iy,t)=\left\{\begin{array}{l}e^{2it(\cos x\cosh y-i\sin x\sinh y-1)}f(z,0),\\ e^{2i(t-1)(\cos x\cosh y-i\sin x\sinh y-1)}f(z,1).\end{array}\right.
\end{equation}

Moreover, using Poisson's summation formula
\[
|f(z,0)|\le \sum_{j=-\infty}^\infty |u_j(0)|e^{-j y}\le e^{y^2/4\alpha}\sum_{j=-\infty}^\infty e^{-\alpha(j+y/2\alpha)^2}\le C_\alpha e^{y^2/4\alpha},
\]
having the same estimate for $|f(z,1)|$. Now, if $y\ge 0$ we can write $f(z,t)$ using the first and the second line in \eqref{eqfga} in order to have that $|f(z,t)|\le C_\alpha e^{y^2/(4a)}.$ Then, by Cauchy's theorem, if $j\le0$, we have for all $y>0$ that,
\[
u_j(t)=\int_{-\pi}^\pi f(x+iy,t)e^{-ij(x+iy)}\,dx\Rightarrow |u_j(t)|\le C_\alpha e^{j y+y^2/4\alpha}.
\]

Finally, we set $y=-2\alpha j$, so $|u_j(t)|\le C_\alpha e^{-2\alpha j^2+4\alpha^2 j^2/4\alpha}=C_\alpha e^{-\alpha j^2}.$ If $j\ge 0$ we can argue in the same fashion, changing the contour of integration.
\end{proof}

This proposition proves that under the hypotheses, the formal calculations are valid for $G_{\lambda-\epsilon}=\sum_j e^{2(\lambda-\epsilon)j^2}|u_j(t)|^2$, and by Fatou's lemma we conclude the result.
\end{proof}

In the three cases $\langle [\mathcal{S},\mathcal{A}]f,f\rangle$ is written as the sum of two positive terms. We can use this fact and \cite[(2.22)]{ekpv2} in order to give a-priori estimates for solutions to \eqref{eqsch}. For example, when considering the weight $e^{\lambda|j|^2}$, just by getting rid of the first sum in \eqref{gausw} we have that
\begin{equation}\label{ap1}
\int_0^1t(1-t)\sumzd\sum_{k=1}^d \big(\cosh(4\lambda j_k)-1\big)e^{2\lambda|j|^2}|u_j(t)|^2\,dt\le c(G(1)+G(0)),
\end{equation}
and in particular this implies that  in the interior we have more decay for the solution. On the other hand, using the formula
\[\begin{aligned}
\sumzd\sum_{k=1}^d&|f_{j+e_k}-f_{j-e_k}|^2+\sumzd\sum_{k=1}^d \big(\cosh(4\lambda j_k)-1\big)|f_j|^2\\&=e^{2\lambda}\sumzd\sum_{k=1}^de^{2\lambda|j|^2}|u_{j+e_k}(t)-u_{j-e_k}(t)|^2-2(e^{4\lambda}-1)\sumzd\sum_{k=1}^d \cosh(4\lambda j_k)e^{2\lambda |j|^2}|u_j(t)|^2,
\end{aligned}\]
we also have the bound
\begin{equation}\label{ap2}
\int_0^1t(1-t)\sumzd\sum_{k=1}^d e^{2\lambda|j|^2}|u_{j+e_k}(t)-u_{j-e_k}|^2\,dt\le c(G(1)+G(0)).
\end{equation}

In the continuous case, in order to conclude Hardy's uncertainty principle, this estimate for the gradient of the solution was crucial. However, here we can avoid the use of this estimate as we will see in Section 4, although it should be useful if one wants to relate this discrete result to the continuous one. Considering the modified Bessel functions, we can get similar estimates to those explained here for $e^{\lambda|j|^2}$. Notice that in a $\ell^\infty$ setting, Proposition 2.1 and Proposition 2.2 imply extra decay in the interior of $[0,1]$ for the solution as well.

\section{Log-convexity properties for solutions to perturbed discrete Schr\"odinger equations}

When we introduce a potential in the discrete Schr\"odinger equation, we cannot use the method we use in the previous Section in order to justify the calculations. However, we can first prove a log-convexity property for solutions to \eqref{eqschpot} where $V\equiv(V_j(t))_{j\in\mathbb{Z}^d}$ is a time-dependent bounded potential, and the solutions satisfy
\begin{equation}\label{condpesolin}
\sumzd e^{2j\cdot\lambda}\big(|u_j(0)|^2+|u_j(1)|^2\big)<+\infty . 
\end{equation}

\begin{lem}
Assume that $u$ is a solution to \eqref{eqschpot} where $V$ is a time-dependent bounded potential. Then, for $t\in[0,1]$ and $\beta\in\mathbb{R}$ we have
\[
\sumzd e^{2\beta j_1}|u_j(t)|^2\le e^{C\|V\|_\infty}\sumzd e^{2\beta j_1}\big(|u_j(0)|^2+|u_j(1)|^2\big),
\]
where $C$ is independent of $\beta$.
\end{lem}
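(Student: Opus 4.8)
The plan is to mimic the continuous argument of \cite{kpv}: conjugate the equation by the linear weight, read off a symmetric operator $\mathcal{S}$ and a skew-symmetric operator $\mathcal{A}$, and run a frequency-function argument in which the potential is carried along as a bounded forcing term whose size does not see $\beta$. First I would set $f_j(t)=e^{\beta j_1}u_j(t)$ and conjugate the equation. A direct computation gives
\[
\partial_t f_j = i\big(e^{-\beta}f_{j+e_1}+e^{\beta}f_{j-e_1}-2f_j+\sum_{k\ge2}(f_{j+e_k}-2f_j+f_{j-e_k})\big)+iV_jf_j,
\]
and splitting the first-coordinate shifts via $e^{\pm\beta}=\cosh\beta\pm\sinh\beta$ we arrive at $\partial_t f=\mathcal{S}f+\mathcal{A}f+G$, where $\mathcal{S}g_j=-i\sinh\beta\,(g_{j+e_1}-g_{j-e_1})$ is symmetric, $\mathcal{A}$ (the remaining free part, with coefficient $\cosh\beta$ on the first-coordinate shifts and the full Laplacian in the other directions) is skew-symmetric, and $G_j=iV_jf_j$ collects the potential.

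The two features that drive the whole proof are: (i) $\mathcal{S}$ and $\mathcal{A}$ are translation-invariant (convolution) operators — conjugating by the character-like weight $e^{\beta j_1}$ preserves translation invariance — hence they are simultaneously diagonalized by the discrete Fourier transform, so $[\mathcal{S},\mathcal{A}]=0$; and (ii) $\|G(t)\|\le\|V\|_\infty\|f(t)\|$, a bound completely independent of $\beta$. Note $\mathcal{S}$ is itself of size $\sinh\beta$, so any estimate that uses $\mathcal{S}$ \emph{linearly} is $\beta$-dependent; the role of the second-order argument is precisely to avoid this. With $H(t)=\langle f,f\rangle$, $D(t)=\langle\mathcal{S}f,f\rangle$ and the frequency function $N=D/H$, using $\mathcal{S}^*=\mathcal{S}$, $\mathcal{A}^*=-\mathcal{A}$, $[\mathcal{S},\mathcal{A}]=0$ and $\mathcal{S}_t=0$ one finds
\[
H'=2D+2\operatorname{Re}\langle G,f\rangle,\qquad D'=2\|\mathcal{S}f\|^2+2\operatorname{Re}\langle\mathcal{S}f,G\rangle,
\]
and hence
\[
N'H^2=2\big(\|\mathcal{S}f\|^2H-D^2\big)+2H\operatorname{Re}\langle(\mathcal{S}-N)f,G\rangle .
\]

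The first term is nonnegative by Cauchy–Schwarz, and since $\|(\mathcal{S}-N)f\|^2=(\|\mathcal{S}f\|^2H-D^2)/H$ the second term is controlled, via Cauchy–Schwarz and $\|G\|\le\|V\|_\infty\|f\|$, by a quantity absorbed into the first up to $-\tfrac12\|V\|_\infty^2H^2$. This yields the $\beta$-free differential inequality $N'\ge-\tfrac12\|V\|_\infty^2$, i.e. $N$ is monotone up to a controlled defect. Writing $(\log H)'=2N+E$ with $|E|=2|\operatorname{Re}\langle G,f\rangle|/H\le2\|V\|_\infty$ and combining the almost-monotonicity of $N$ with $|E|\le 2\|V\|_\infty$ gives, by the usual convexity/maximum-principle argument on $[0,1]$, the two-point bound $H(t)\le e^{C\|V\|_\infty}(H(0)+H(1))$ with $C$ independent of $\beta$ (the exponent is in fact of the form $\|V\|_\infty+\|V\|_\infty^2$, which is the claimed bound).

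The step I expect to be the main obstacle is not this formal computation but its justification. Unlike in Section 2, the presence of the potential precludes the complex-analytic propagation used there, so from the two-point hypothesis \eqref{condpesolin} alone I cannot directly assert that $H(t)$ and $D(t)$ are finite and differentiable on $(0,1)$. I would resolve this by approximation: for finitely supported data the factorial spatial decay of discrete Schr\"odinger evolutions (the free kernel being built from $I_{k-m}(2it)$) makes every weighted sum finite and the computation above rigorous, producing the estimate with a constant that does not depend on the data; a density plus Fatou argument, exactly as in the justification following Theorem 2.1, then extends it to all data satisfying \eqref{condpesolin}. The one structural point to verify with care is that the free commutator vanishes \emph{identically}, so that no factor of $\sinh\beta$ leaks into the constant; the translation invariance of $\mathcal{S}$ and $\mathcal{A}$ makes this transparent and is exactly what renders $C$ independent of $\beta$.
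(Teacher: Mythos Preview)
Your argument is correct and reaches the conclusion by a genuinely different route than the paper. The paper does not work with the weight $e^{\beta j_1}$ directly; instead it truncates it to a bounded function $\theta^N_{j_1}$ (equal to $\beta j_1$ for $j_1\le N$, constant for $j_1>2N$), so that $f=e^{\theta^N}u$ lies in $\ell^2$ automatically. The truncated operators $S_N,A_N$ no longer commute, but the paper shows $\langle[S_N,A_N]f,f\rangle\ge -c_\beta N^{-1}\|f\|^2$, applies the perturbative log-convexity lemma of \cite{ekpv2}, and lets $N\to\infty$ via Fatou; the $\beta$-dependent commutator defect vanishes in the limit, leaving only the $\|V\|_\infty$ contribution. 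Your observation that the untruncated $\mathcal{S},\mathcal{A}$ are constant-coefficient shift operators and hence $[\mathcal{S},\mathcal{A}]=0$ exactly is cleaner and makes the $\beta$-independence transparent from the outset, whereas in the paper it only emerges after the limit. Conversely, the paper's truncation buys a self-contained justification, while your proposed density-through-finitely-supported-data step is heavier than necessary: the conjugated generator $i\big(e^{-\beta}T_1^++e^{\beta}T_1^--2+\sum_{k\ge2}(T_k^++T_k^--2)+V\big)$ is a bounded operator on $\ell^2$, so once $u(0)\in\ell^2_\beta$ the curve $f(t)=e^{\beta j_1}u(t)$ is automatically $C^1([0,1],\ell^2)$ and your frequency-function computation is rigorous as written, with no approximation of data needed. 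One minor point: your computation yields an exponent of order $\|V\|_\infty+\|V\|_\infty^2$ rather than $\|V\|_\infty$; this is harmless for every downstream use in the paper (only $\beta$-independence matters), and the perturbative lemma from \cite{ekpv2} that the paper invokes produces the same quadratic dependence.
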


\begin{rem}
$\|V\|_\infty$ stands for $\sup_{j\in\mathbb{Z}^d,t\in[0,1]}\{|V_j(t)|\}.$
\end{rem}

\begin{proof}
We are going to assume, without loss of generality that $\beta>0$. In order to give a rigorous proof of the result, we are going to truncate properly, following the procedure in \cite{kpv}, the weight $e^{\beta j_1}$ so that all the quantities that we are going to compute later on are valid and finite. To do this, we consider a function $\varphi \in C^\infty(\mathbb{R})$ such that $0\le \varphi\le 1$ and
\[
\varphi(x)=\left\{\begin{array}{ll}1,&x\le1,\\0,&x\ge 2.\end{array}\right.
\]  

Now, for $N\in\mathbb{N}$ we define $\varphi^N(x)=\varphi\left(\frac{x}{N}\right)$ and $\theta^N(s)=\beta \int_0^s(\varphi^N(y))^2dy$, so $\theta^N\in C^\infty(\mathbb{R})$ is non-decreasing and
\[
\theta^N(s)=\left\{\begin{array}{ll}\beta s,&s\le N,\\c\beta,&s> 2N.\end{array}\right.
\]

Moreover, we have that $|(\theta^N)'(s)|\le \beta$ and $|(\theta^N)''(s)|\le \frac{2\beta}{N}$. Finally, we discretize $\theta^N(s)$ by considering its evaluation at $\mathbb{Z}$. In other words, $\theta_{j_1}^N=\theta^N(j_1)$, for $j_1\in\mathbb{Z}.$ Notice that $\theta_{j_1}^N\uparrow \beta j_1$ as $N\rightarrow \infty.$ Now we take $f_j=e^{\theta_{j_1}^N}u_j(t)$ and compute the operators $S_N$ and $A_N$, symmetric and skew-symmetric respectively such that $\partial_tf_j=S_N f_j+A_Nf_j+iV_jf_j$. We have that
\[\begin{aligned}
\langle[S_N,A_N]&f_j,f_j\rangle=-2\Re\sumzd\sinh(\theta_{j_1+1}^N-2\theta_{j_1}^N+\theta_{j_1-1}^N)f_{j+e_1}\overline{f_{j-e_1}}\\&+2\sumzd\big(\cosh(\theta_{j_1+1}^N-\theta_{j_1}^N)\sinh(\theta_{j_1+1}^N-\theta_{j_1}^N)-\cosh(\theta_{j_1}^N-\theta_{j_1-1}^N)\sinh(\theta_{j_1}^N-\theta_{j_1-1}^N)\big)|f_j|^2,
\end{aligned}\]
and we want to bound this quantity from below. To do that, we define $v_N(x)=\theta^N(x+1)-\theta^N(x)$ so that $|\theta_{j_1+1}^N-2\theta_{j_1}^N+\theta_{j_1-1}^N|=|v_N(j_1)-v_N(j_1-1)|\le |v_N'(\xi)|\le |(\theta^N)''(\xi_1)|\le \frac{C\beta}{N}$ and $ \sinh(\theta_{j_1+1}^N-2\theta_{j_1}^N+\theta_{j_1-1}^N)\ge -\sinh\left(\frac{C\beta}{N}\right).$ On the other hand, the factor that appears in the second sum is $|\cosh(v_N(j_1))\sinh(v_N(j_1))-\cosh(v_N(j_1-1))\sinh(v_N(j_1-1)|\le \cosh(2\xi)|v_N(j_1)-v_N(j_1-1)|\le \frac{C\beta\cosh(2\beta)}{N}$, since $|\xi|\le \max\{|v_N(j_1-1)|,|v_N(j_1)|\}\le \beta.$ Combining these estimates we bound the commutator by
\[
\langle[S_N,A_N]f_j,f_j\rangle\ge-\left(\sinh\left(\frac{C\beta}{N}\right)+\frac{C\beta}{N}\cosh(2\beta)\right)\sumzd|f_j|^2.
\]

Now we use the perturbative version of Lemma 2.1 (see \cite{ekpv2}), taking into account that $|\partial_tf_j-S_Nf_j-A_Nf_j|=|V_j||f_j|\le\|V\|_\infty|f_j|$. Thus,
\[\begin{aligned}
\sumzd e^{2\theta_{j_1}^N}|u_j(t)|^2&\le e^{C(\sinh(C\beta/N)+\beta\cosh(2\beta)/N+\|V\|_\infty)}\sumzd e^{2\theta_{j_1}^N}\big(|u_j(0)|^2+|u_j(1)|^2\big)\\&\le e^{C(\sinh(C\beta/N)+\beta\cosh(2\beta)/N+\|V\|_\infty)}\sumzd e^{2\beta j_1}\big(|u_j(0)|^2+|u_j(1)|^2\big).
\end{aligned}\]

Using Fatou's lemma, we conclude the result.
\end{proof}

\begin{rem}
Using the same method, it is straightforward to see now that, for $\lambda\in\mathbb{R}^d,$
\begin{equation}\label{pesolin}
\sumzd e^{2\lambda\cdot j}|u_j(t)|^2\le e^{C\|V\|_\infty}\sumzd e^{2\lambda\cdot j}\big(|u_j(0)|^2+|u_j(1)|^2\big),
\end{equation}
\end{rem}
Once we have proved the lemma, it is quite easy to see that the log-convexity properties of Theorem 2.1, 2.2 and 2.3 are also satisfied when we add the potential $V$ to the equation. Notice that this fact gives another proof of those theorems, just by setting the potential equal to 0, although in this way we lose the a priori estimates \eqref{ap1}, \eqref{ap2}. Let us see how we can do this in the case of Theorem 2.2.

\begin{teo}
Assume $u=(u_j)_{j\in\mathbb{Z}^d}$ is a solution to the equation \eqref{eqschpot} where $V$ is a time-dependent bounded potential. Then, for $\alpha>0$ and $t\in[0,1]$,
\[
\sumzd\prod_{k=1}^d K_{j_k}^2(\alpha)|u_j(t)|^2 \le e^{c\|V\|_\infty}\sumzd\prod_{k=1}^dK_{j_k}^2(\alpha)\big(|u_j(0)|^2+|u_j(1)|^2\big),
\]
provided that the right-hand side is finite.
\end{teo}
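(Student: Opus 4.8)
The plan is to deduce this perturbative weighted estimate from the linear exponential bound \eqref{pesolin} of Remark 3.2, exploiting the fact advertised in the introduction that the weight $\prod_{k=1}^d K_{j_k}^2(\alpha)$ is a \emph{nonnegative superposition of linear exponential weights} $e^{2\lambda\cdot j}$. The decisive structural point is that the constant in \eqref{pesolin} does not depend on the exponential rate $\lambda$, so it survives an integration in $\lambda$; everything else is a Tonelli interchange of a sum and an integral, both of nonnegative quantities.

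First I would establish an integral representation of the weight. Starting from the definition $K_m(x)=\int_0^\infty e^{-x\cosh t}\cosh(mt)\,dt$ and using $\cosh(mt)=\tfrac12(e^{mt}+e^{-mt})$ together with the evenness of $\cosh t$, one gets $K_m(\alpha)=\tfrac12\int_{-\infty}^\infty e^{-\alpha\cosh t}e^{mt}\,dt$. Squaring and changing variables to $\lambda=(s+t)/2$, $\nu=(s-t)/2$, for which $\cosh s+\cosh t=2\cosh\lambda\cosh\nu$ and the inner $\nu$-integral reproduces a $K_0$, a direct computation gives
\[
K_m^2(\alpha)=\int_{-\infty}^\infty e^{2\lambda m}\,K_0(2\alpha\cosh\lambda)\,d\lambda,\qquad \alpha>0.
\]
(At $m=0$ this is just the product formula $K_0^2(\alpha)=2\int_0^\infty K_0(2\alpha\cosh\lambda)\,d\lambda$, so the normalization is correct.) Taking the product over $k=1,\dots,d$ then yields
\[
\prod_{k=1}^d K_{j_k}^2(\alpha)=\int_{\mathbb{R}^d}e^{2\lambda\cdot j}\,d\mu_\alpha(\lambda),\qquad d\mu_\alpha(\lambda)=\prod_{k=1}^d K_0(2\alpha\cosh\lambda_k)\,d\lambda,
\]
where $d\mu_\alpha$ is a \emph{nonnegative} measure on $\mathbb{R}^d$ of finite mass, decaying like $e^{-2\alpha\sum_k\cosh\lambda_k}$, hence integrable against every $e^{2\lambda\cdot j}$.

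With this representation the proof is a Tonelli argument. Since all summands and integrands are nonnegative, I would write
\[
\sumzd\prod_{k=1}^d K_{j_k}^2(\alpha)|u_j(t)|^2=\int_{\mathbb{R}^d}\Big(\sumzd e^{2\lambda\cdot j}|u_j(t)|^2\Big)\,d\mu_\alpha(\lambda),
\]
and likewise for the data at $t=0,1$. For every $\lambda$ at which the data sum is finite, Remark 3.2 / \eqref{pesolin} gives
\[
\sumzd e^{2\lambda\cdot j}|u_j(t)|^2\le e^{C\|V\|_\infty}\sumzd e^{2\lambda\cdot j}\big(|u_j(0)|^2+|u_j(1)|^2\big),
\]
with $C$ independent of $\lambda$. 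Integrating this fibrewise inequality against $d\mu_\alpha$ and applying Tonelli once more on the right returns exactly the weight $\prod_k K_{j_k}^2(\alpha)$ on both sides, producing the claimed estimate with $c=C$.

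Two points need care rather than real difficulty. The hypothesis that the right-hand side of the theorem is finite, together with the Tonelli identity for the data, forces $\sumzd e^{2\lambda\cdot j}(|u_j(0)|^2+|u_j(1)|^2)<\infty$ for $\mu_\alpha$-a.e.\ $\lambda$; this is precisely the finiteness condition \eqref{condpesolin} needed to invoke \eqref{pesolin}, so the fibrewise inequality holds almost everywhere and the exceptional set is harmless. The main—and essentially the only genuinely computational—obstacle is verifying the superposition identity for the \emph{squared} weight and confirming the $\lambda$-uniformity of the constant in \eqref{pesolin}; once these are in place the statement follows with no further analysis, and the identical scheme reproduces the perturbative analogues of Theorems 2.1 and 2.3 upon replacing $d\mu_\alpha$ by the corresponding nonnegative spectral measure of the relevant weight.
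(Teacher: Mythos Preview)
Your argument is correct and is essentially the paper's own proof: both represent $\prod_k K_{j_k}^2(\alpha)$ as a nonnegative superposition of linear exponentials $e^{2\lambda\cdot j}$, apply \eqref{pesolin} fibrewise (using that its constant is $\lambda$-independent), and conclude by Tonelli. The only cosmetic difference is that the paper keeps the two factors of $K_{j_k}(\alpha)=\tfrac12\int_{\mathbb{R}} e^{\lambda j_k-\alpha\cosh\lambda}\,d\lambda$ separate and integrates over $(\lambda_1,\lambda_2)\in\mathbb{R}^{2d}$, whereas you have already performed the inner integral via the change of variables $(\lambda,\nu)=\big(\tfrac{\lambda_1+\lambda_2}{2},\tfrac{\lambda_1-\lambda_2}{2}\big)$ to obtain the Nicholson-type density $K_0(2\alpha\cosh\lambda)$ on $\mathbb{R}^d$.
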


\begin{proof}
When the right-hand side is finite, we have that $\sum_{j}e^{(\lambda_1+\lambda_2)\cdot j}\big(|u_j(0)|^2+|u_j(1)|^2\big)$ is finite $\forall\ \lambda_1,\lambda_2\in\mathbb{R}^d$, so applying \eqref{pesolin} we have
\[
\sumzd e^{(\lambda_1+\lambda_2)\cdot j}|u_j(t)|^2\le e^{c\|V\|_\infty}\sumzd e^{(\lambda_1+\lambda_2)\cdot j}\big(|u_j(0)|^2+|u_j(1)|^2\big),
\]
having that $(\lambda_1+\lambda_2)\cdot j=\sum_k (\lambda_{1,k}+\lambda_{2,k})\cdot j_k$. If we multiply this expression by $e^{-\alpha\sum_{k}(\cosh\lambda_{1,k}+\cosh\lambda_{2,k})}$ and integrate it in $(\lambda_1,\lambda_2)\in\mathbb{R}^{2d},$ the theorem holds using that
\[
\int_{\mathbb{R}}e^{\lambda j -\alpha \cosh\lambda}\,d\lambda=c K_j(\alpha).
\]\end{proof}
In order to prove the same log-convexity properties for the other weights we can do the same, now using that
\[
\sqrt{2\pi}e^{2\alpha j^2}=\int_{\mathbb{R}}e^{2\sqrt{\alpha}\lambda j-\lambda^2/2}\,d\lambda,
\]
while, in order to get the inverse of the modified Bessel function we do not have an explicit formula, but it can be checked that multiplying the linear exponential by a similar function that we have used in the proof of Theorem 3.1 we get that the integral behaves asymptotically in the same way as the inverse of the modified Bessel function $\frac{1}{I_j(\alpha)}$, whose asymptotic behavior is described in \eqref{asymb}.
\section{Hardy's uncertainty principle: Real variable approach}
In this Section we are going to give a discrete version of Hardy's principle. As we have pointed out above, among the weights we have studied here we are going to follow the approach in \cite{ekpv2} considering the function $e^{\lambda |j|^2}$. In order to do that we need the following Carleman inequality:

\begin{lem} The inequality
\[\begin{aligned}
R\sqrt{\frac{\epsilon}{8\mu}}&\|e^{\mu|j+Rt(1-t)e_1|^2-(1+\epsilon)R^2t(1-t)/16\mu}g\|_{L^2([0,1],\ell^2(\mathbb{Z}^d))}\\
&\le \|e^{\mu|j+Rt(1-t)e_1|^2-(1+\epsilon)R^2t(1-t)/16\mu}(\partial_t-i\Delta_d)g\|_{L^2([0,1],\ell^2(\mathbb{Z}^d))},
\end{aligned}\]
holds, for $R$ large enough, when $\epsilon>0, \mu>0$ and $g\in C_0^\infty(\mathbb{Z}^d\times[0,1]).$
\end{lem}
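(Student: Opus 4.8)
The plan is to argue by conjugation, exactly as in the continuous Carleman estimate of \cite{ekpv2}, and to reduce the inequality to the positivity of a commutator that can be estimated by the discrete computations already performed in Section 2. Write $\phi(j,t)=\mu|j+Rt(1-t)e_1|^2-(1+\epsilon)R^2t(1-t)/16\mu$ and set $f=e^{\phi}g$; since $g\in C_0^\infty(\mathbb{Z}^d\times[0,1])$, the function $f$ vanishes at $t=0,1$. The asserted inequality is then equivalent to the lower bound
\[
\|\mathcal{L}_\phi f\|_{L^2([0,1],\ell^2(\mathbb{Z}^d))}\ge R\sqrt{\tfrac{\epsilon}{8\mu}}\,\|f\|_{L^2([0,1],\ell^2(\mathbb{Z}^d))},\qquad \mathcal{L}_\phi:=e^{\phi}(\partial_t-i\Delta_d)e^{-\phi}.
\]
Because $f$ is compactly supported in $t$, the operator $\partial_t$ is skew-symmetric on $L^2([0,1],\ell^2(\mathbb{Z}^d))$, so the whole argument takes place in this space.

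First I would split $\mathcal{L}_\phi=\mathcal{S}+\mathcal{A}$ into its symmetric and skew-symmetric parts. Conjugating the discrete Laplacian by $e^{\phi}$ turns the shifts $u_{j\pm e_k}$ into $e^{\phi_j-\phi_{j\pm e_k}}f_{j\pm e_k}$; since $\phi$ is quadratic, the exponents $\phi_j-\phi_{j\pm e_k}$ are affine in $j$, and separating the even and odd parts of these coefficients produces $\cosh$ and $\sinh$ factors exactly as in the proofs of Theorems 2.1--2.3. The term $\partial_t$ and the multiplication by the real function $-\partial_t\phi$ land in $\mathcal{A}$ and $\mathcal{S}$ respectively. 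With this decomposition, the elementary identity
\[
\|\mathcal{L}_\phi f\|^2=\|\mathcal{S}f\|^2+\|\mathcal{A}f\|^2+\langle[\mathcal{S},\mathcal{A}]f,f\rangle\ge\langle[\mathcal{S},\mathcal{A}]f,f\rangle
\]
reduces everything to showing $\langle[\mathcal{S},\mathcal{A}]f,f\rangle\ge \tfrac{R^2\epsilon}{8\mu}\|f\|^2$.

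The core step is the computation of $[\mathcal{S},\mathcal{A}]$. Writing $\mathcal{A}=\partial_t+\mathcal{A}_0$ with $\mathcal{A}_0$ the spatial skew part, one has $[\mathcal{S},\mathcal{A}]=[\mathcal{S},\mathcal{A}_0]-\dot{\mathcal{S}}$, where $\dot{\mathcal{S}}$ differentiates the $t$-dependent coefficients of $\mathcal{S}$. The purely spatial commutator $[\mathcal{S},\mathcal{A}_0]$ is of the same nature as \eqref{gausw}: it yields a non-negative quadratic form in the differences $f_{j+e_k}-f_{j-e_k}$ together with a $\cosh$-type multiplication term, both non-negative. The profile $b(t)=-(1+\epsilon)R^2t(1-t)/16\mu$ is chosen precisely so that $b''(t)=(1+\epsilon)R^2/8\mu$ is constant; since $-\partial_t\phi$ sits in $\mathcal{S}$, this constant enters $[\mathcal{S},\mathcal{A}]$ through $-\dot{\mathcal{S}}$ as a clean term $+(1+\epsilon)R^2/8\mu$ times the identity. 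The remaining $j$-dependent parts of $-\dot{\mathcal{S}}$, which carry the acceleration $\ddot a=-2R$ and the velocity $\dot a=R(1-2t)$ of the moving center $-Rt(1-t)e_1$, are indefinite but of order $R^2/8\mu$; the aim is to bound them below by $-R^2/8\mu$, leaving the net surplus $\epsilon R^2/8\mu$. Taking square roots then gives the claimed estimate.

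The main obstacle is the honest bookkeeping of $[\mathcal{S},\mathcal{A}]$ in the discrete setting. Unlike the continuous case, conjugation does not merely translate $\nabla\phi$: it generates exponentials $e^{\phi_j-\phi_{j\pm e_k}}$, hence $\sinh$ and $\cosh$ of affine functions of $j$ that grow in $j_1$, and one must check that the cross terms coupling the $e_1$-drift to the discrete second differences stay dominated by the positive contributions from $[\mathcal{S},\mathcal{A}_0]$ and from the constant $+(1+\epsilon)R^2/8\mu$. This is where I expect to need $R$ large: the discrete corrections (factors such as $\sinh(2\mu)/2\mu$ and the $O(1/R)$ remainders in $\dot{\mathcal{S}}$) must be absorbed into the margin $\epsilon R^2/8\mu$, which is exactly what forces the restriction ``$R$ large enough'' in the statement. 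Apart from this verification, the scheme is identical to \cite{ekpv2}: conjugation, symmetric/skew decomposition, the commutator identity, and positivity of the commutator.
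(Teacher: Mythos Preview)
Your framework is correct and matches the paper's: conjugate, split into symmetric and skew-symmetric parts, and reduce to showing $\langle \mathcal{S}_t f+[\mathcal{S},\mathcal{A}]f,f\rangle\ge \frac{\epsilon R^2}{8\mu}\|f\|^2$. The gap is in your description of why this lower bound holds. You say the indefinite pieces of the commutator ``are of order $R^2/8\mu$'' and that the discrete corrections are ``$O(1/R)$ remainders'' to be absorbed into the margin $\epsilon R^2/8\mu$. This is not true. After the computation, the quadratic form contains, in the $e_1$-direction, the cross term
\[
8\mu R(1-2t)\sum_j\cosh\!\big(2\mu(j_1+\tfrac12+Rt(1-t))\big)\,\Im(f_{j+e_1}\overline{f_j})
\]
together with a negative piece $-32\mu^3\sum_j(j_1+Rt(1-t))^2|f_j|^2$. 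These are \emph{not} uniformly $O(R^2)$: the $\cosh$ factor and the quadratic $(j_1+Rt(1-t))^2$ are unbounded in $j_1$, so for fixed $t$ the coefficient in front of $|f_j|^2$ (after using $2\Im(f_{j+e_1}\overline{f_j})\ge -|f_{j+e_1}|^2-|f_j|^2$) can be arbitrarily negative unless one plays it off against the genuinely positive pieces $\sinh(2\mu)\sum|f_{j+e_1}-f_{j-e_1}|^2$ and $2\sinh(2\mu)\sum(\cosh(4\mu(j_1+Rt(1-t)))-1)|f_j|^2$, which also grow in $j_1$.

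In the paper this balancing is the whole proof: one splits $j$ into regions according to the size of $\cosh(2\mu(j_1+Rt(1-t)))$ relative to $R$ (or $R(1-2t)$), checks in each region which two terms dominate, and in the borderline region has to analyze a quadratic in the parameter $\alpha=\cosh(2\mu(j_1+Rt(1-t)))/R(1-2t)$ via its discriminant. Moreover, for certain $t$ the crude bound $2\Im(f_{j+e_1}\overline{f_j})\ge -|f_{j+e_1}|^2-|f_j|^2$ is not enough at a few boundary indices, and one must exploit the off-diagonal structure (writing $f_{j_0+1}=-z_0 f_{j_0}$ and tracking $\gamma_0=\Im z_0$) together with a piece of the positive $\cosh^2$ term at $j_0+1$. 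None of this is an $O(1/R)$ correction; the role of ``$R$ large'' is to make the identified leading terms actually dominate once the case analysis is set up, not to kill a small discrete remainder. Your plan as written does not account for this and would stall at the commutator lower bound.
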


\begin{proof}

Consider $f_j(t)=e^{\mu|j+Rt(1-t)e_1|^2-(1+\epsilon)R^2t(1-t)/16\mu}g_j(t)$. Then, we can compute the operators $\mathcal{S}$ and $\mathcal{A}$ symmetric and skew-symmetric respectively, such that 
\[e^{\mu|j+Rt(1-t)e_1|^2-(1+\epsilon)R^2t(1-t)/16\mu}(\partial_t-i\Delta_d)g_j=\partial_tf_j-\mathcal{S}f_j-\mathcal{A}f_j.\]

Then, since
\[
\|\partial_tf-\mathcal{S}f-\mathcal{A}f\|_{L^2([0,1],\ell^2(\mathbb{Z}^d))}^2\ge \int_0^1\langle \mathcal{S}_tf+[\mathcal{S},\mathcal{A}]f,f\rangle dt,
\]
what we need to prove is $\langle \mathcal{S}_tf+[\mathcal{S},\mathcal{A}]f,f\rangle \ge \frac{\epsilon R^2}{8\mu}\|f\|_{\ell^2(\mathbb{Z}^d)}.$ After computing the commutator, we see that
\[\begin{aligned}
\langle \mathcal{S}_tf&+[\mathcal{S},\mathcal{A}]f,f\rangle=\sinh(2\mu)\sumzd|f_{j+e_1}-f_{j-e_1}|^2+32\mu^3\sumzd \left(j_1+R t(1-t)-\frac{R}{16\mu^2}\right)^2|f_j|^2\\
&+2\sinh(2\mu)\sumzd\big(\cosh(4\mu(j_1+Rt(1-t)))-1\big)|f_j|^2-32\mu^3\sumzd\big(j_1+Rt(1-t)\big)^2|f_j|^2\\
&+2\mu R^2(1-2t)^2\sumzd|f_j|^2+8\mu R(1-2t)\sumzd\cosh(2\mu(j_1+1/2+Rt(1-t)))\Im(f_{j+e_1}\overline{f_j})\\
&\sinh(2\mu)\sumzd\sum_{k=2}^d|f_{j+e_k}-f_{j-e_k}|^2+2\sinh(2\mu)\sumzd\sum_{k=2}^d \big(\cosh(4\mu j_k)-1\big)|f_j|^2+\frac{\epsilon R^2}{8\mu}\sumzd|f_j|^2.
\end{aligned}\]

Thus, the commutator can be studied separately, according to the behavior in each variable $j_k$, and the lemma holds if we prove that the following expression, written conveniently in one dimension,
\begin{equation}\label{eqc}\begin{aligned}
&32\mu^3\sumzd \left(j+R t(1-t)-\frac{R}{16\mu^2}\right)^2|f_j|^2+2\mu R^2(1-2t)^2\sum_j|f_j|^2\\
&+2\sinh(2\mu)\sum_j\big(\cosh(4\mu(j+Rt(1-t)))-1\big)|f_j|^2-32\mu^3\sum_j\big(j+Rt(1-t)\big)^2|f_j|^2\\
&+8\mu R(1-2t)\sum_j\cosh(2\mu(j+1/2+Rt(1-t)))\Im(f_{j+1}\overline{f_j})
\end{aligned}\end{equation}
is positive, since, going back to the expression for the commutator, this proves that the first three lines are positive, while the first two terms in the last line is clearly positive. By symmetry, the procedure will be the same if either $t\in[0,\frac12]$ or $t\in[\frac12,1]$, so we assume $t\le\frac12$. What we are going to see is that when $R$ is large, the leading terms in the expression for a general $j$ make the sum positive. We have to distinguish several regions and possibilities in order to give the whole proof of the lemma. First, let us assume that $t\ge \frac12-\frac{1}{16\mu^{3/2}},\ \ \mu>\mu_0$, where  $\mu_0$ is the positive value such that $\cosh^2(\mu_0)=2\sinh(2\mu_0)$, having that $\mu_0\simeq 0.255413\dots$. In this case we use $2\Im(f_{j+1}\overline{f_j})\ge -|f_{j+1}|^2-|f_j|^2$ to bound the expression by
\[\begin{aligned}
&32\mu^3\sum_j \left(j+R t(1-t)-\frac{R}{16\mu^2}\right)^2|f_j|^2+2\mu R^2(1-2t)^2\sum_j|f_j|^2\\
&+4\sinh(2\mu)\sum_j\big(\cosh^2(2\mu(j+Rt(1-t)))-1\big)|f_j|^2-32\mu^3\sum_j\big(j+Rt(1-t)\big)^2|f_j|^2\\
&-8\mu\cosh(\mu) R(1-2t)\sum_j\cosh(2\mu(j+Rt(1-t)))|f_j|^2,
\end{aligned}\]
so in this way we have an expression of the form $F(\mu,R,t,j)|f_j|^2$, and we would like to have that $F$ is positive. If $j$ is given such that $\cosh(2\mu(j+Rt(1-t)))>\frac{\cosh\mu}{4\mu^{1/2}\sinh(2\mu)}R,$ then, the leading terms of $F$, when $R$ is large, are
\[\begin{aligned}
4\sinh(2\mu)&\cosh^2(2\mu(j+Rt(1-t)))-8\mu\cosh(\mu) R(1-2t)\cosh(2\mu(j+Rt(1-t)))\\&>\cosh(2\mu(j+Rt(1-t)))R\left(\frac{\cosh\mu}{\mu^{1/2}}-\frac{\cosh\mu}{\mu^{1/2}}\right)=0.
\end{aligned}\]

On the other hand, if $j$ is given such that $\cosh(2\mu(j+Rt(1-t)))<\frac{1}{8\mu^{1/2}\cosh\mu}R$, then, since this implies that $j+Rt(1-t)$ is much less than $\frac{R}{16\mu^2}$, we see that the sign of $F$ is determined by
\[
\frac{R^2}{8\mu}-8\mu\cosh(\mu) R(1-2t)\cosh(2\mu(j+Rt(1-t)))>0.
\]

Hence, we have to consider now $j$ such that $\cosh(2\mu(j+Rt(1-t)))=\alpha R$, with $\alpha$ between the two values of the previous conditions. But in this case the leading terms in the expression are written as
\[
R^2\left(\frac{1}{8\mu}+4\sinh(2\mu)\alpha^2-\frac{\cosh\mu}{\mu^{1/2}}\alpha\right),
\]
and this is a polynomial of degree two in $\alpha$, whose discriminant is negative since $\mu>\mu_0$, so there is no change of sign in the polynomial and it is positive. This completes the proof when $t\ge \frac12-\frac{1}{16\mu^{3/2}}$ and $\mu>\mu_0$.

When $t\le \frac12-\frac{1}{16\mu^{3/2}},\ \mu>\mu_0$, we use the same reasoning to show that if $j$ is given such that
\begin{equation}\label{regj}
\cosh(2\mu(j+Rt(1-t)))<\frac{16\mu^2+1}{64\mu^2\cosh\mu}R(1-2t),\ \text{ or }\ \cosh(2\mu(j+Rt(1-t)))>\frac{4\mu\cosh\mu}{\sinh(2\mu)}R(1-2t),
\end{equation}
then the expression $F$ is positive for $R$ large enough. On the other hand, when $j$ is not in that region, we cannot use the same argument as before, because now we can have negative terms. However, we can define $j_0$ and $j_0'$ such that
\begin{itemize}
\item $j_0+Rt(1-t)$ is positive and for the first time $\cosh(2\mu(j_0+Rt(1-t)))\ge\frac{16\mu^2+1}{64\mu^2\cosh\mu}R(1-2t).$
\item $j_0'+Rt(1-t)$ is negative and for the first time $\cosh(2\mu(j_0'+Rt(1-t)))\ge\frac{16\mu^2+1}{64\mu^2\cosh\mu}R(1-2t).$
\end{itemize}

We are going to see here that for $j=j_0$ we can absorb the negative terms in \eqref{eqc}. Furthermore, we can use the same argument to prove the same for $j=j_0+1,j_0+2,\dots$ until we go into the region defined by \eqref{regj}. Actually, it is easy to check that $j_0+3$ is in the region defined in \eqref{regj}. Moreover, the proof for $j=j_0',j_0'-1,\dots$ is the same.

Since $j_0-1$ is in \eqref{regj} we have that the part of \eqref{eqc} involving $f_{j_0}$ that we need to study is
\begin{equation}\label{eqj0}\begin{aligned}
&\left(32\mu^3\left(j_0+R t(1-t)-\frac{R}{16\mu^2}\right)^2+2\mu R^2(1-2t)^2-32\mu^3\big(j_0+Rt(1-t)\big)^2\right)|f_{j_0}|^2\\
&+\left(4\sinh(2\mu)\big(\cosh^2(2\mu(j_0+Rt(1-t)))-1\big)-4\mu R(1-2t)\cosh(2\mu(j_0-1/2+Rt(1-t)))\right)|f_{j_0}|^2\\
&+8\mu R(1-2t)\cosh(2\mu(j_0+1/2+Rt(1-t)))\Im(f_{j_0+1}\overline{f_{j_0}})\\&+2\sinh(2\mu)\big(\cosh^2(2\mu(j_0+1+Rt(1-t)))-1\big)|f_{j_0+1}|^2,
\end{aligned}\end{equation}
where we add a part of \eqref{eqc} involving $f_{j_0+1}$ in order to gain an extra positive term. Now first we see that if $\Im(f_{j_0+1}\overline{f_{j_0}})\ge0$, then \eqref{eqj0} is positive. Indeed, in this case we get rid of the last two lines in \eqref{eqj0}, and considering that $\cosh(2\mu(j_0+Rt(1-t)))=\alpha R(1-2t)$ we can write the leading term in \eqref{eqj0} as
\[
2\mu R^2(1-2t)^2+4\sinh(2\mu)\alpha^2 R^2(1-2t)^2-4\mu \alpha \frac{R^2(1-2t)^2}{\cosh\mu+\sinh\mu},
\]
since $\cosh(x+a)=\cosh x\cosh a+\sqrt{\cosh^2x-1}\sinh a\simeq \cosh x(\cosh a+\sinh a)\Rightarrow \cosh(x-a)\simeq \frac{\cosh x}{\cosh a+\sinh a}$. Again, computing the discriminant of this polynomial we see that the expression is positive.

On the other hand, if $\Im(f_{j_0+1}\overline{f_{j_0}})<0,$ there is $z_0\in\mathbb{C}$ such that $\Im(z_0)=\gamma_0>0$ and $f_{j_0+1}=-z_0f_{j_0}\Rightarrow \Im(f_{j_0+1}\overline{f_{j_0}})=-\gamma_0|f_{j_0}|^2,\ \ |f_{j_0+1}|^2\ge \gamma_0^2|f_{j_0}|^2$. By using this we can bound the leading term in \eqref{eqj0} by 
\[\begin{aligned}
\frac{R^2}{8\mu}&+2\mu R^2(1-2t)^2+4\sinh(2\mu)\alpha^2 R^2(1-2t)^2-4\mu \alpha \frac{R^2(1-2t)^2}{\cosh\mu+\sinh\mu}\\&-8\mu R^2(1-2t)^2\alpha (\cosh\mu+\sinh\mu)\gamma_0+2\sinh(2\mu)\alpha^2 R^2(1-2t)^2(\cosh(2\mu)+\sinh(2\mu))^2\gamma_0^2.
\end{aligned}\]

Since $1\ge 1-2t$ we can write this as $R^2(1-2t)^2$ times a polynomial expression in $\alpha$ and $\gamma$ of degree 2 in each parameter. We can write it as a polynomial of degree two in $\alpha$ and compute minus the discriminant, getting
\begin{equation}\label{eqg}\begin{aligned}
4\left(2\mu+\frac{1}{8\mu}\right)&\big(4\sinh(2\mu)+2\sinh(2\mu)(\cosh(2\mu)+\sinh(2\mu))^2\gamma_0^2\big)\\&-\left(8\mu\gamma_0(\cosh\mu+\sinh\mu)+\frac{4\mu}{\cosh\mu+\sinh\mu}\right)^2.
\end{aligned}\end{equation}

We want the latter to be positive, and this is another polynomial, now in $\gamma_0$, of degree two, so we compute again the discriminant of the polynomial and we get
\[\begin{aligned}
4096\mu^4-4&\left(32\mu\sinh(2\mu)-\frac{16\mu^2}{(\cosh\mu+\sinh\mu)^2}+\frac{2\sinh(2\mu)}{\mu}\right)\bigg(16\mu\sinh(2\mu)(\cosh(2\mu)+\sinh(2\mu))^2\\&+\frac{\sinh(2\mu)}{\mu}(\cosh(2\mu)+\sinh(2\mu))^2-64\mu^2(\cosh\mu+\sinh\mu)^2\bigg),
\end{aligned}\]
which is easy to see that is negative. Hence, we have that \eqref{eqg} is positive and therefore  the part of \eqref{eqc} we are studying is positive. As we have pointed out above, we can repeat this argument for the rest of coefficients that we have to check to see that \eqref{eqc} is positive in this case.

It only remains the study of the case $\mu<\mu_0$. In order to proof Theorem 4.1 below, we do not need this case, since we are going to assume $\mu>1/2>\mu_0$. Nevertheless, it is easy to see that we can follow the following sketch to see that in this case \eqref{eqc} is also positive.
\begin{itemize}
\item Assume $j$ such that $\cosh(2\mu(j+Rt(1-t)))\le \frac{R}{200}$. After identifying the leading terms in $\eqref{eqc}$ for that $j$, we get $R^2$ times a polynomial of degree two in the variable $(1-2t)$, whose discriminant is negative when $\mu^2\cosh^2\mu<625$. Since $\mu<\mu_0$ we are in this case and the negativity of the discriminant implies that the leading term is positive.
\item If $\cosh(2\mu(j+Rt(1-t)))\ge R$ then we can prove the positivity of the leading term using $4\sinh(2\mu)\ge 8\mu\cosh\mu$.
\item Finally, if $\cosh(2\mu(j+Rt(1-t)))= \alpha R$ with $\alpha\in\left[\frac{1}{200},1\right]$, the leading term leads again to a polynomial expression of degree two, which is negative when
\[
64\mu^2\cosh^2\mu-16\sinh(2\mu)\left(\frac{1}{8\mu}+2\mu\right)<0,
\]
and this happens when $\mu<\mu_0$. Again, the latter implies that the leading term is positive.\end{itemize}\end{proof}

After proving this Carleman inequality, we can give a version of Hardy's uncertainty principle in the discrete setting using the weight $e^{\lambda |j|^2}$ as the discrete version of Gaussian decay, following the procedure used in \cite{ekpv2}.

\begin{teo}
Assume $u=(u_j)_{j\in\mathbb{Z}^d}$ is a solution to the equation \eqref{eqschpot}, where $V$ is a time dependent bounded potential, which satisfies the decay condition \eqref{decaygauss}, for some $\lambda>1/2$. Then $u\equiv0.$
\end{teo}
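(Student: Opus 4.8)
The plan is to adapt the real-variable scheme of \cite{ekpv2}, feeding the Carleman inequality of Lemma 4.1 with the decay propagated by the perturbative log-convexity of Section 3. The arithmetic of the exponents is what matters: since $\lambda>1/2$ I can fix once and for all a pair $1/2<\mu<\lambda$ together with a small $\epsilon>0$, and apply Lemma 4.1 with this $\mu$ (legitimate since $\mu>1/2>\mu_0$). Writing $\varphi_R(j,t)=\mu|j+Rt(1-t)e_1|^2-(1+\epsilon)R^2t(1-t)/16\mu$, the whole argument aims at the single conclusion that $\|e^{\varphi_R}u\|_{L^2([0,1],\ell^2(\mathbb{Z}^d))}$ cannot stay bounded below as $R\to\infty$ unless $u\equiv0$.

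First I would use the perturbative estimate \eqref{pesolin}, averaged against a Gaussian as in the remark following Theorem 3.1, to turn \eqref{decaygauss} into the uniform bound $\sup_{t\in[0,1]}\sumzd e^{2\lambda|j|^2}|u_j(t)|^2\le M<\infty$; in particular $|u_j(t)|\le\sqrt{M}\,e^{-\lambda|j|^2}$ pointwise. Because $\mu<\lambda$, the exponent $\varphi_R(j,t)-\lambda|j|^2$ is concave in $j$ and thus bounded above for fixed $R,t$, so $e^{\varphi_R}u\in L^2([0,1],\ell^2)$ for every $R$; this square-summability is what replaces the discrete gradient a priori estimate. I would then apply Lemma 4.1 to $g_j(t)=\theta_N(j)\eta(t)u_j(t)$, with $\theta_N$ a spatial cut-off ($\theta_N\equiv1$ on $|j|\le N$, supported in $|j|\le2N$) and $\eta\in C_0^\infty((0,1))$ equal to $1$ on $[1/4,3/4]$. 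Using $(\partial_t-i\Delta_d)u=iVu$ from \eqref{eqschpot} one gets $(\partial_t-i\Delta_d)g=iV\theta_N\eta u-i\eta[\Delta_d,\theta_N]u+\theta_N\eta'u$, and after absorbing the bounded-potential term on the left (valid once $R\sqrt{\epsilon/8\mu}>\|V\|_\infty$) and letting $N\to\infty$ — which annihilates the commutator term, an $O(1/N)$ multiple of translates of $u$ living on $|j|\sim N$, because $e^{\varphi_R}u\in\ell^2$ — I am left with
\[
\Big(R\sqrt{\tfrac{\epsilon}{8\mu}}-\|V\|_\infty\Big)\|e^{\varphi_R}\eta u\|\le\|e^{\varphi_R}\eta'u\|.
\]

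The endgame is a comparison of exponential rates in $R$. On $\operatorname{supp}\eta'$, which sits near $t=0$ and $t=1$, the quantity $s=t(1-t)$ is small and $\sup_j(\varphi_R-\lambda|j|^2)$ has the form $R^2\big(\tfrac{\mu\lambda s^2}{\lambda-\mu}-\tfrac{(1+\epsilon)s}{16\mu}\big)$, which is negative for $s$ small; choosing $\operatorname{supp}\eta'$ in a small enough neighbourhood of the endpoints makes the right-hand side $O(e^{-c'R^2})$ with $c'>0$. Suppose now $u\not\equiv0$ and pick $(j_0,1/2)$ with $u_{j_0}(1/2)\ne0$; there $\eta\equiv1$ and $\varphi_R(j_0,1/2)\sim R^2\big(\tfrac{\mu}{16}-\tfrac{1+\epsilon}{64\mu}\big)$, which tends to $+\infty$ exactly because $\mu>1/2$. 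Hence the left-hand side is at least $\delta_0\,R\,e^{cR^2}$ with $c>0$, while the right-hand side is exponentially small; letting $R\to\infty$ gives a contradiction, so $u_{j_0}(t_0)=0$ at every interior point and therefore $u\equiv0$.

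The heart of the matter, and the step I expect to be most delicate, is this two-sided balancing of the weight, for it is precisely where $\lambda>1/2$ is spent: the exponent must decay faster than $|j|^2$ (so that $\mu<\lambda$, making $e^{\varphi_R}u$ summable and the spatial truncation harmless) and yet blow up at interior times (so that $\mu>1/2$), and a single $\mu$ meeting both demands exists if and only if $\lambda>1/2$. The rest is technical bookkeeping: justifying the $N\to\infty$ passage, calibrating the endpoint neighbourhoods supporting $\eta'$ so the boundary rate is genuinely negative, and — if $V$ is only bounded — a preliminary mollification guaranteeing $g\in C_0^\infty(\mathbb{Z}^d\times[0,1])$ so that Lemma 4.1 applies verbatim.
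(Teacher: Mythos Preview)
Your proposal is correct and follows the same overall scheme as the paper's proof: propagate the Gaussian decay uniformly in time via Section~3, apply the Carleman inequality of Lemma~4.1 to a spatially and temporally truncated $u$, absorb the potential term for $R$ large, send the spatial cutoff to infinity, and compare exponential rates. The one substantive difference is the time cutoff. The paper takes an $R$-dependent $\eta_R$ equal to $1$ on $[1/R,1-1/R]$ and supported in $[1/(2R),1-1/(2R)]$, so that on $\operatorname{supp}\eta_R'$ one has $Rt(1-t)\le 1$; this makes the boundary term merely bounded (by $N_\epsilon R^2e^{2\lambda/\epsilon}$ times the uniform weighted norm), and the contradiction comes from a lower bound on a whole space--time box $\{|j|\lesssim R,\ t\approx 1/2\}$. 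You instead keep $\eta$ fixed and force the boundary exponent itself to be negative, which is a legitimate and slightly more elementary variant: it avoids the $R$-dependent scaling and lets you conclude from a single interior point.

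One caveat: the concrete choice $\eta\equiv 1$ on $[1/4,3/4]$ does \emph{not} work in general. On $\operatorname{supp}\eta'$ you then have $s=t(1-t)$ as large as $3/16$, whereas your boundary exponent $R^2\bigl(\tfrac{\mu\lambda s^2}{\lambda-\mu}-\tfrac{(1+\epsilon)s}{16\mu}\bigr)$ is negative only for $s<\tfrac{(1+\epsilon)(\lambda-\mu)}{16\mu^2\lambda}$, which can be arbitrarily small when $\lambda-\mu$ is small. You do say later that $\operatorname{supp}\eta'$ should be pushed into a small neighbourhood of the endpoints, and that is exactly what is needed: take $\eta\equiv 1$ on $[\tau,1-\tau]$ with $\tau$ chosen (depending on $\lambda,\mu,\epsilon$ but not on $R$) so that $\tau(1-\tau)<\tfrac{(1+\epsilon)(\lambda-\mu)}{16\mu^2\lambda}$. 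With that correction your argument goes through; the final step ``$u_{j_0}(t_0)=0$ at every interior point'' is cleanest stated as $u(1/2)=0$, after which $u\equiv 0$ follows from $\|u(t)\|_{\ell^2}\le e^{\|\Im V\|_\infty}\|u(1/2)\|_{\ell^2}$, exactly as in the paper.
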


\begin{proof}
As we have seen in Section 3, the decay condition \eqref{decaygauss} implies that there is $N_{\lambda,V}$ depending only on $\lambda$ and $\|V\|_\infty$ such that
\[
\sup_{t\in[0,1]}\sumzd e^{2\lambda |j|^2}|u_j(t)|^2<N_{\lambda,V}.
\]

For given $R>0$, we take $\mu$ and $\epsilon$ such that
\[
\frac{(1+\epsilon)^{3/2}}{2(1-\epsilon)^3}<\mu\le \frac{\lambda}{1+\epsilon},
\]
and we consider, for $R<M\in\mathbb{N}$,  smooth functions $0\le \theta^M(x),\eta_R(t) \le 1$ such that
\[
\theta^M(x)=\left\{\begin{array}{ll}1,&|x|\le M,\\ 0,&|x|>2M.\end{array}\right.,\ \eta_R(t)=\left\{\begin{array}{ll}1,& t\in\left[\frac{1}{R},1-\frac{1}{R}\right],\\ 0,& t\in\left[0,\frac{1}{2R}\right]\cup\left[1-\frac{1}{2R},1\right].\end{array}\right.
\]

Then we define $\theta_j^M=\theta^M(j)$ for $j\in\mathbb{Z}^d$ and $g_j(t)=\theta_j^M\eta_R(t)u_j(t)$, so $g_j(t)$ is compactly supported in $\mathbb{Z}^d\times(0,1)$ and satisfies
\[
\partial_tg_j-i(\Delta_d g_j+V_jg_j)=\eta_R'\theta_j^Mu_j(t)-i\sum_{k=1}^d\big((\theta_{j_k+1}^M-\theta_{j_k}^M)\eta_R u_{j+e_k}-(\theta_{j_k}^M-\theta_{j_k-1}^M)\eta_R u_{j-e_k}\big).
\]

We apply Lemma 4.1 to show that there is a constant $N_\epsilon$ such that
\begin{equation}\label{carle}\begin{aligned}
R^2 \int_0^1\sum_j &e^{2\mu|j+Rt(1-t)e_1|^2-(1+\epsilon)R^2t(1-t)/8\mu}|g_j|^2\le N_\epsilon \|V\|_\infty^2\int_0^1\sum_j e^{2\mu|j+Rt(1-t)e_1|^2-(1+\epsilon)R^2t(1-t)/8\mu}|g_j|^2\\&+N_\epsilon\int_0^1\sum_j e^{2\mu|j+Rt(1-t)e_1|^2-(1+\epsilon)R^2t(1-t)/8\mu}|\eta_R'\theta_j^Mu_j|^2\\&+N_\epsilon\int_0^1\sum_j\sum_{k=1}^d e^{2\mu|j-e_k+Rt(1-t)e_1|^2-(1+\epsilon)R^2t(1-t)/8\mu}\eta_R^2|\theta_j^M-\theta_{j-e_k}^M|^2|u_j|^2\\&+N_\epsilon\int_0^1\sum_j\sum_{k=1}^d e^{2\mu|j+e_k+Rt(1-t)e_1|^2-(1+\epsilon)R^2t(1-t)/8\mu}\eta_R^2|\theta_{j+e_k}^M-\theta_{j}^M|^2|u_j|^2.
\end{aligned}\end{equation}

Now we have to understand each term of the right-hand side of \eqref{carle} separately. The first term can be hidden in the left-hand side by taking $R$ large enough. For the second term, it is supported where $-2M\le j\le 2M$ and $t\in\left[\frac{1}{2R},\frac{1}{R}\right]\cup\left[1-\frac{1}{R},1-\frac{1}{2R}\right]$, so, since $2ab\le \epsilon a^2+b^2/\epsilon$,
\[
\mu|j+Rt(1-t)e_1|^2\le \mu(1+\epsilon)|j|^2+\mu\left(1+\frac{1}{\epsilon}\right)\le \lambda |j|^2+\frac{\lambda}{\epsilon},
\]
while, for the other two regions, the support in $t$ and $j$ changes, but the same reasoning leads to
\[\begin{aligned}
&\mu|j-e_k+Rt(1-t)e_1|^2\le \lambda |j|^2+\frac{\lambda}{\epsilon}(Rt(1-t)+1)^2\le \lambda |j|^2+\frac{c\lambda R^2}{\epsilon},\\
&\mu|j+e_k+Rt(1-t)e_1|^2\le \lambda |j|^2+\frac{\lambda}{\epsilon}(Rt(1-t)+1)^2\le \lambda |j|^2+\frac{c\lambda R^2}{\epsilon},
\end{aligned}\]
with $c$ independent of $R,\lambda$, and $\epsilon$. Thus, using the natural bounds for the functions $\theta^M$ and $\eta_R$ we get
\begin{equation}\begin{aligned}
R^2 \int_0^1\sum_j &e^{2\mu|j+Rt(1-t)e_1|^2-(1+\epsilon)R^2t(1-t)/8\mu}|g_j|^2 \\&\le N_\epsilon(R^2 e^{2\lambda/\epsilon}+e^{c\lambda R^2/\epsilon}M^{-2})\sup_{t\in[0,1]}\sum_j e^{2\lambda |j|^2}|u_j(t)|^2. 
\end{aligned}\end{equation}

Furthermore, the term multiplied by $M^{-2}$ tends to zero when $M$ tends to infinity. On the other hand, we can bound the left-hand side just integrating in $t\in[(1-\epsilon)/2,(1+\epsilon)/2]$ and considering that $|j|\le \epsilon(1-\epsilon)^2R/4$. Under these circumstances, $g_j=u_j$ and moreover
\[
\mu|j+Rt(1-t)e_1|^2-(1+\epsilon)R^2t(1-t)/16\mu\ge \frac{R^2}{64\mu^2}\big(4\mu^2(1-\epsilon)^6-(1+\epsilon)^3\big)>0,
\]
so there is $C(\lambda,\epsilon)>0$ and $N_{\lambda,V,\epsilon}$ only depending on $\lambda,\|V\|_\infty$ and $\epsilon$ such that
\[
e^{C(\lambda,\epsilon)R^2}\|u\|_{L^2\big(\left(\frac{1-\epsilon}{2},\frac{1+\epsilon}{2}\right),\ell^2\left(|j|\le \epsilon(1-\epsilon)^2R/4\right)\big)}\le N_{\lambda,V,\epsilon}.
\]

Combining the fact that $N^{-1}\|u(0)\|_{\ell^2}\le\|u(t)\|_{\ell^2}\le N\|u(0)\|_{\ell^2}$ for $0\le t\le1$ and $N=e^{\|\Im V\|_\infty}$ with
\[
\|u(t)\|_{\ell^2(\mathbb{Z}^d)}\le \|u(t)\|_{\ell^2(|j|\le \epsilon(1-\epsilon)^2R/4)}+e^{-\lambda \epsilon^2(1-\epsilon)^4 R^2/16}N_{\lambda,V},
\]
we conclude that
\[
\|u(0)\|_{\ell^2(\mathbb{Z}^d)}\le N_{\lambda,V,\epsilon}(e^{-C(\lambda,\epsilon)R^2}+e^{-\lambda \epsilon^2(1-\epsilon)^4 R^2/16}).
\]

Finally, we let $R$ tend to infinity to have that $u\equiv0$.
\end{proof}
According to \eqref{hardyd} and \cite{jlmp}, this is not the best result we should expect. In the one dimensional setting, the sharp results assume decay conditions of the type $\sum_{j}\frac{|u_j(t_i)|^2}{I_j^2(\alpha)}$, with $\alpha<1$, $t_i\in\{0,1\}$, so it is reasonable to think that in the multidimensional setting the sharp condition is similar, now changing the modified Bessel function $I_j(\alpha)$ with the product of modified Bessel functions $\prod_{k=1}^dI_{j_k}(\alpha)$.

\section{Acknowledgments.} This paper is part of the Ph. D. thesis of the author, who is supported by the predoctoral grant BFI-2011-11 of the Basque Government and the projects MTM2011-24054, IT641-13. The author would like to thank L. Vega, without whose help this paper would not have been possible.

\end{document}